\newtheorem{theorem}{Theorem}[section]
\newtheorem{proposition}[theorem]{Proposition}
\newtheorem{definition}{Definition}[section]
\newtheorem{lemma}{Lemma}[section]
\newtheorem{remark}{Remark}[section]
\newtheorem{example}{Example}
\newtheorem{condition}{Condition}[section]
\begin{document}
\title{Large deviations for method-of-quantiles estimators of one-dimensional parameters\thanks{This
work was partially supported by the MIUR Excellence Department Project awarded to the
Department of Mathematics, University of Rome Tor Vergata (CUP E83C18000100006), by University
of Rome Tor Vergata (research programme "Mission: Sustainability", project ISIDE (grant no.
E81I18000110005)), and by Istituto Nazionale di Alta Matematica (GNAMPA funds).}}
\author{Valeria Bignozzi\thanks{Dipartimento di Statistica e Metodi
Quantitativi, Universit\`{a} di Milano Bicocca, Via Bicocca degli
Arcimboldi 8, I-20126 Milano, Italia. e-mail:
\texttt{valeria.bignozzi@unimib.it}}\and Claudio
Macci\thanks{Dipartimento di Matematica, Universit\`a di Roma Tor
Vergata, Via della Ricerca Scientifica, I-00133 Roma, Italia.
e-mail: \texttt{macci@mat.uniroma2.it}}\and Lea
Petrella\thanks{Dipartimento di Metodi e Modelli per l'Economia,
il Territorio e la Finanza, Sapienza Universit\`{a} di Roma, Via
del Castro Laurenziano 9, I-00161 Roma, Italia. e-mail:
\texttt{lea.petrella@uniroma1.it}}}
\date{}
\maketitle
\begin{abstract}
\noindent We consider method-of-quantiles estimators of unknown
one-dimensional parameters, namely the analogue of
method-of-moments estimators obtained by matching empirical and
theoretical quantiles at some probability level $\lambda\in(0,1)$.
The aim is to present large deviation results for these estimators
as the sample size tends to infinity. We study in detail several
examples; for specific models we discuss the choice of the optimal
value of $\lambda$ and we compare the convergence of the
method-of-quantiles and method-of-moments estimators.\\
\ \\
\emph{AMS Subject Classification:} 60F10; 62F10; 62F12.\\
\emph{Keywords:} location parameter; methods of moments; order
statistics; scale parameter; skewness parameter.
\end{abstract}

\section{Introduction}\label{sec:introduction}
Estimation of parameters of statistical or econometric models is
one of the main concerns in the parametric inference framework.
When the probability law is specified (up to unknown parameters),
the main tool to solve this problem is the Maximum Likelihood (ML)
technique; on the other hand, whenever the assumption of a
particular distribution is too restrictive, different solutions
may be considered. For instance the Method of Moments (MM) and the
Generalized Method of Moments (GMM) provide valuable alternative
procedures; in fact the application of these methods only requires
the knowledge of some moments.

A different approach is to consider the Method of Quantiles (MQ),
that is the analogue of MM with quantiles; MQ estimators are
obtained by matching the empirical percentiles with their
theoretical counterparts at one or more probability levels.
Inference via quantiles goes back to (Aitchinson and Brown 1957)
where the authors consider an estimation problem for a
three-parameter log-normal distribution; their approach consists
in minimizing a suitable distance between the theoretical and
empirical quantiles, see for instance (Koenker 2005). Successive
papers deal with the estimation of parameters of extreme value
(see (Hassanein 1969a) and (Hassanein 1972)), logistic (see
(Hassanein 1969b)) and Weibull (see (Hassanein 1971))
distributions. A more recent reference is (Castillo and Hadi 1995)
where several other distributions are studied. We also recall
(Dominicy and Veredas 2013) where the authors consider an indirect
inference method based on the simulation of theoretical quantiles,
or a function of them, when they are not available in a closed
form. In (Sgouropoulos, Yao and Yastremiz 2015), an iterative
procedure based on ordinary least-squares estimation is proposed
to compute MQ estimators; such estimators can be easily modified
by adding a LASSO penalty term if a sparse representation is
desired, or by restricting the matching within a given range of
quantiles to match a part of the target distribution. Quantiles
and empirical quantiles represent a key tool also in quantitative
risk management, where they are studied under the name of
Value-at-Risk (see for instance (McNeil, Frey and Embrechts
2015)).

In our opinion, MQ estimators deserve a deeper investigation
because of several advantages. They allow to estimate parameters
when the moments are not available and they are invariant with
respect to increasing transformations; moreover they have less
computational problems, and behave better when distributions are
heavy-tailed or their supports vary with the parameters.

The aim of this paper is to present large deviation results for MQ
estimators (as the sample size tends to infinity) for statistical
models with one-dimensional unknown parameter $\theta\in\Theta$,
where the parameter space $\Theta$ is a subset of the real line;
thus we match empirical and theoretical quantiles at one
probability level $\lambda\in(0,1)$. The theory of large
deviations is a collection of techniques which gives an asymptotic
computation of small probabilities on an exponential scale (see
e.g. (Dembo and Zeitouni 1998) as a reference on this topic).
Several examples of statistical models are considered throughout
the paper, and some particular distributions are studied in
detail. For most of the examples considered, we are able to find
an explicit expression for the rate function which governs the
large deviation principle of the MQ estimators and, when possible,
our investigation provides the optimal $\lambda$ that guarantees a
faster convergence to the true parameter (see Definition
\ref{def:optimal-lambda}). Further we compare MQ and MM estimators
in terms of the local behavior of the rate functions around the
true value of the parameter in the spirit of Remark
\ref{rem:comparison-between-rfs}. Which one of the estimators
behaves better strictly depends on the type of parameter we have
to estimate and varies upon distributions. However, we provide
explicit examples (a part from the obvious ones where the MM
estimators are not available) where MQ estimators are preferable.

We conclude with the outline of the paper. In Section
\ref{sec:Preliminaries} we recall some preliminaries. Sections
\ref{sec:results-for-MQestimators} and
\ref{sec:results-for-MMestimators} are devoted to the results for
MQ and MM estimators, respectively. In Section \ref{sec:examples}
we present examples for different kind of parameters (e.g. scale,
location, skewness, etc.), and for each example specific
distributions are discussed in Section
\ref{sec:local-comparison-for-examples}.

\section{Preliminaries}\label{sec:Preliminaries}
In this section we present
some preliminaries on large deviations and we provide a rigorous
definition of the MQ estimators studied in this paper (see
Definition \ref{def:MQ-estimators} below).


\subsection{Large deviations}\label{sub:LD-preliminaries}
We start with the concept of large deviation principle (LDP for
short). A sequence of random variables $\{W_n:n\geq 1\}$ taking
values on a topological space $\mathcal{W}$ satisfies the LDP with
rate function $I:\mathcal{W}\to[0,\infty]$ if $I$ is a lower
semi-continuous function,
$$\liminf_{n\to\infty}\frac{1}{n}\log P(W_n\in O)\geq-\inf_{w\in O}I(w)\ \mbox{for all open sets}\ O$$
and
$$\limsup_{n\to\infty}\frac{1}{n}\log P(W_n\in C)\leq-\inf_{w\in C}I(w)\ \mbox{for all closed sets}\ C.$$
We also recall that a rate function $I$ is said to be good if all
its level sets \mbox{$\{\{w\in\mathcal{W}:I(w)\leq\eta\}:\eta\geq 0\}$}
are compact.

\begin{remark}[Local comparison between rate functions around the unique common zero]\label{rem:comparison-between-rfs}
It is known that, if $I$ uniquely vanish at some
$w_0\in\mathcal{W}$, then the sequence of random variables
converges weakly to $w_0$. Moreover, if we have two rate functions
$I_1$ and $I_2$ which uniquely vanish at the same point
$w_0\in\mathcal{W}$, and if $I_1(w)>I_2(w)>0$ for $w$ in a 
neighborhood of $w_0$ (except $w_0$) then any sequence
which satisfies the LDP with rate function $I_1$ converges to
$w_0$ faster than any sequence which satisfies the LDP with rate
function $I_2$.
\end{remark}

We also recall a recent large deviation result on order statistics
of i.i.d. random variables (see Proposition
\ref{prop:Theorem3.2-in-HMP-restricted} below) which plays a
crucial role in this paper. We start with the following condition.

\begin{condition}\label{cond:(*)inHMP-with-restriction}
Let $\{X_n:n\geq 1\}$ be a sequence of i.i.d. real valued random
variables with distribution function $F$, and assume that $F$ is
continuous and strictly increasing on $(\alpha,\omega)$, where
$-\infty\leq\alpha<\omega\leq\infty$. Moreover let $\{k_n:n\geq
1\}$ be such that $k_n\in\{1,\ldots,n\}$ for all $n\geq 1$ and
$\lim_{n\to\infty}\frac{k_n}{n}=\lambda\in(0,1)$.
\end{condition}

We introduce the following notation: for all $k\geq 1$,
$X_{1:k}\leq\cdots\leq X_{k:k}$ are the order statistics of the
sample $X_1,\ldots,X_k$; for $p,q\in(0,1)$ we set
\begin{equation}\label{eq:def-function-H}
H(p|q):=p\log\frac{p}{q}+(1-p)\log\frac{1-p}{1-q},
\end{equation}
that is the relative entropy of the Bernoulli distribution with
parameter $p$ with respect to the Bernoulli distribution with
parameter $q$.

\begin{proposition}[Theorem 3.2 in (Hashorva, Macci and Pacchiarotti 2013) for $\lambda\in(0,1)$]\label{prop:Theorem3.2-in-HMP-restricted}
Assume that Condition \ref{cond:(*)inHMP-with-restriction} holds.
Then $\{X_{k_n:n}:n\geq 1\}$ satisfies the LDP with good rate
function $I_{\lambda,F}$ defined by
$$I_{\lambda,F}(x):=\left\{\begin{array}{ll}
H(\lambda|F(x))&\ \mbox{for}\ x\in(\alpha,\omega)\\
\infty&\ \mbox{otherwise}.
\end{array}\right.$$
\end{proposition}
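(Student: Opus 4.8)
The plan is to establish the large deviation principle for the order statistics $\{X_{k_n:n}\}$ by exploiting the well-known identity that relates order statistics to sums of Bernoulli indicators, and then apply Cramér's theorem. The key observation is that for any $x\in(\alpha,\omega)$, the event $\{X_{k_n:n}\leq x\}$ holds if and only if at least $k_n$ of the sample points fall at or below $x$, that is $\{X_{k_n:n}\leq x\}=\{\sum_{i=1}^n \mathbf{1}_{\{X_i\leq x\}}\geq k_n\}$. Writing $S_n(x):=\sum_{i=1}^n \mathbf{1}_{\{X_i\leq x\}}$, the summands are i.i.d. Bernoulli random variables with parameter $F(x)$, so $S_n(x)/n$ is a sample mean to which standard large deviation tools apply.

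Let me explain the main steps. First I would compute, for fixed $x\in(\alpha,\omega)$, the exponential decay rates of the tail probabilities $P(X_{k_n:n}\leq x)=P(S_n(x)/n\geq k_n/n)$ and $P(X_{k_n:n}> x)=P(S_n(x)/n< k_n/n)$. Since $k_n/n\to\lambda$, Cramér's theorem for Bernoulli$(F(x))$ variables gives that the Legendre transform of the cumulant generating function equals the Bernoulli relative entropy $H(\cdot\,|\,F(x))$. Concretely, the rate of $P(S_n(x)/n\geq\lambda)$ is $H(\lambda|F(x))$ when $\lambda>F(x)$ and $0$ when $\lambda\leq F(x)$, while the rate of the complementary lower tail is $H(\lambda|F(x))$ when $\lambda<F(x)$ and $0$ when $\lambda\geq F(x)$. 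Combining these upper- and lower-tail estimates, the function $x\mapsto I_{\lambda,F}(x)=H(\lambda|F(x))$ emerges naturally as the candidate rate function, and one checks directly that it is nonnegative, vanishes precisely where $F(x)=\lambda$, and is lower semicontinuous. I would then assemble the full LDP from these pointwise estimates: the upper bound on closed sets and the lower bound on open sets both follow by reducing general sets to half-lines $(-\infty,x]$ and $(x,\infty)$ and invoking monotonicity of $F$ together with the tail rates above.

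The goodness of the rate function deserves separate attention. Because $F$ is continuous and strictly increasing on $(\alpha,\omega)$, the map $x\mapsto F(x)$ is a homeomorphism onto $(F(\alpha^+),F(\omega^-))\supseteq$ a neighborhood of $\lambda$, and $H(\lambda|\cdot)$ is continuous and coercive as its argument approaches $0$ or $1$; hence the sublevel sets $\{I_{\lambda,F}\leq\eta\}$ are closed and bounded away from the endpoints, giving compactness. Here the hypotheses $\lambda\in(0,1)$ and strict monotonicity are exactly what prevent the degeneracies that would otherwise occur at $\lambda\in\{0,1\}$ or on flat stretches of $F$, which is presumably why the cited general theorem is being specialized to this regime.

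\textbf{The main obstacle} I anticipate is not the heuristic identification of the rate function, which is essentially immediate from Cramér's theorem, but rather the careful handling of the boundary behavior at $\alpha$ and $\omega$ and the matching of the two one-sided tail rates into a single clean LDP. In particular, one must verify that mass does not escape to the endpoints of $(\alpha,\omega)$ (so that setting $I_{\lambda,F}=\infty$ outside this interval is legitimate and the principle is not merely a weak LDP), and one must be careful that the pointwise rates computed from the upper and lower tails glue together consistently across the point where $F(x)=\lambda$. Since the statement is quoted as a specialization of Theorem 3.2 in \cite{HashorvaMacciPacchiarotti}, the cleanest route is to cite that result directly and simply verify that under Condition \ref{cond:(*)inHMP-with-restriction} with $\lambda\in(0,1)$ its hypotheses are met and its conclusion reduces to the stated form of $I_{\lambda,F}$.
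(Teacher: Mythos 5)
The paper gives no proof of this proposition at all: it is imported verbatim as Theorem 3.2 of \cite{HashorvaMacciPacchiarotti} restricted to $\lambda\in(0,1)$, with the goodness of $I_{\lambda,F}$ justified by pointing to Remark 1 of that reference. Your proposal instead sketches a self-contained derivation via the duality $\{X_{k_n:n}\leq x\}=\{\sum_{i=1}^n\mathbf{1}_{\{X_i\leq x\}}\geq k_n\}$ and Cram\'er's theorem for Bernoulli$(F(x))$ sample means; this is the standard route to such results and is essentially the mechanism behind the cited theorem, so the approach is sound and the candidate rate $H(\lambda|F(x))$ is correctly identified. What your sketch buys is transparency about where each hypothesis enters ($\lambda\in(0,1)$ for coercivity of $H(\lambda|\cdot)$ near $0$ and $1$, strict monotonicity for uniqueness of the zero); what it leaves undone is precisely the part you flag as the main obstacle, namely gluing the two one-sided tail rates into a full LDP for general open and closed sets and controlling the behavior near $\alpha$ and $\omega$. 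Two cautions there: first, passing from $P(W_n\leq b)-P(W_n\leq a)$ to a lower bound for intervals requires exploiting that $I_{\lambda,F}$ is monotone on either side of $F^{-1}(\lambda)$, so that the subtracted term is exponentially negligible; second, both the exponential tightness (no mass escaping to the endpoints) and the compactness of the level sets require $F(\alpha^+)=0$ and $F(\omega^-)=1$, not merely that the range of $F$ covers a neighborhood of $\lambda$ as your goodness argument suggests --- with, say, $F(\alpha^+)>0$ the sublevel sets would fail to be closed. Since these are exactly the points settled in \cite{HashorvaMacciPacchiarotti}, your closing suggestion to cite that theorem and check its hypotheses is the route the paper actually takes, and is the cleanest way to finish.
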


\begin{remark}[$I_{\lambda,F}^{\prime\prime}(F^{-1}(\lambda))$ as the inverse of an asymptotic variance]\label{rem:Dasgupta-connection}
Theorem 7.1(c) in (Dasgupta 2008) states that, under suitable
conditions, $\{\sqrt{n}(X_{k_n:n}-F^{-1}(\lambda)):n\geq 1\}$
converges weakly to the centered Normal distribution with variance
$\sigma^2:=\frac{\lambda(1-\lambda)}{(F^\prime(F^{-1}(\lambda)))^2}$.
Then, if we assume that $F$ is twice differentiable, we can check
that
$I_{\lambda,F}^{\prime\prime}(F^{-1}(\lambda))=\frac{1}{\sigma^2}$
with some computations.
\end{remark}

A more general formulation of Proposition
\ref{prop:Theorem3.2-in-HMP-restricted} could be given for
$\lambda\in[0,1]$ but, in view of the applications presented in
this paper, we prefer to consider a restricted version of the
result with $\lambda\in(0,1)$ only (so we do not consider the
cases $\lambda=0$ and $\lambda=1$). This restriction allows to
have the goodness of the rate function $I_{\lambda,F}$ (see Remark
1 in (Hashorva, Macci and Pacchiarotti 2013)) which is needed to
apply the contraction principle (see e.g. Theorem 4.2.1 in (Dembo
and Zeitouni 1998)).

\subsection{MQ estimators}\label{sub:MQ-preliminaries}
Here we present a rigorous definition of MQ estimators. In view of
this, the next Condition \ref{cond:forMQestimator} plays a crucial
role.

\begin{condition}\label{cond:forMQestimator}
Let $\{F_\theta:\theta\in\Theta\}$ be a family of distribution
functions where $\Theta\subset\mathbb{R}$ and, for all
$\theta\in\Theta$, $F_\theta$ is continuous and strictly
increasing on some $(\alpha_\theta,\omega_\theta)$, where
$-\infty\leq\alpha_\theta<\omega_\theta\leq\infty$ (as happens for
the distribution function $F$ in Condition
\ref{cond:(*)inHMP-with-restriction}). Moreover, for
$\lambda\in(0,1)$, consider the function
$$\theta\mapsto F_\theta^{-1}(\lambda)$$
(for $\theta\in\Theta$). Moreover we assume that, for all
$m\in\mathcal{M}:=\bigcup_{\theta\in\Theta}(\alpha_\theta,\omega_\theta)$,
the equation $F_\theta^{-1}(\lambda)=m$ admits a unique solution
(with respect to $\theta\in\Theta$) which will be denoted by
$\theta_\lambda(m)$.
\end{condition}

Now we are ready to present the definition.

\begin{definition}\label{def:MQ-estimators}
Assume that Condition \ref{cond:forMQestimator} holds. Then
$\left\{\theta_\lambda(X_{[\lambda n]:n}):n\geq 1\right\}$ 
is a sequence of MQ estimators (for the level $\lambda\in(0,1)$).
\end{definition}

Proposition \ref{prop:LD-for-MQ-estimators} below provides the LDP
for the sequence of estimators in Definition
\ref{def:MQ-estimators} (as the sample size $n$ goes to infinity)
when the true value of the parameter is $\theta_0\in\Theta$.
Actually we give a more general formulation in terms of
$\left\{\theta_\lambda(X_{k_n:n}):n\geq 1\right\}$, where 
$\{k_n:n\geq 1\}$ is a sequence as in Condition
\ref{cond:(*)inHMP-with-restriction}.

\section{Results for MQ estimators}\label{sec:results-for-MQestimators}
In this section we prove the LDP for the sequence of estimators in
Definition \ref{def:MQ-estimators}. Moreover we discuss some
properties of the rate function; in particular Proposition
\ref{prop:second-derivative-I} (combined with Remark
\ref{rem:comparison-between-rfs} above) leads us to define a
concept of optimal $\lambda$ presented in Definition
\ref{def:optimal-lambda} below.

We start with our main result and, in view of this, we present the
following notation:
\begin{equation}\label{eq:def-function-h}
h_{\lambda,\theta_0}(\theta):=F_{\theta_0}(F_\theta^{-1}(\lambda))\
(\mbox{for}\
F_\theta^{-1}(\lambda)\in(\alpha_{\theta_0},\omega_{\theta_0})).
\end{equation}

\begin{proposition}[LD for MQ estimators]\label{prop:LD-for-MQ-estimators}
Assume that $\{k_n:n\geq 1\}$ is as in Condition
\ref{cond:(*)inHMP-with-restriction} and that Condition
\ref{cond:forMQestimator} holds. Moreover assume that, for some
$\theta_0\in\Theta$, $\{X_n:n\geq 1\}$ are i.i.d. random variables
with distribution function $F_{\theta_0}$. Then, if the
restriction of $\theta_\lambda(\cdot)$ on
$(\alpha_{\theta_0},\omega_{\theta_0})$ is continuous,
$\left\{\theta_\lambda(X_{k_n:n}):n\geq
1\right\}$ satisfies the LDP with good rate function
$I_{\lambda,\theta_0}$ defined by
$$I_{\lambda,\theta_0}(\theta):=\left\{\begin{array}{ll}
\lambda\log\frac{\lambda}{h_{\lambda,\theta_0}(\theta)}
+(1-\lambda)\log\frac{1-\lambda}{1-h_{\lambda,\theta_0}(\theta)}&\ \mbox{for}\ \theta\in\Theta\ \mbox{such that}\ F_\theta^{-1}(\lambda)\in(\alpha_{\theta_0},\omega_{\theta_0})\\
\infty&\ \mbox{otherwise},
\end{array}\right.$$
where $h_{\lambda,\theta_0}(\theta)$ is defined by
\eqref{eq:def-function-h}.
\end{proposition}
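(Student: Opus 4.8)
The plan is to derive the LDP for the estimators as a direct image of the LDP for the order statistics $\{X_{k_n:n}:n\geq 1\}$ under the map $(F_{(\bullet)}^{-1}(\lambda))^{-1}$, using the contraction principle. First I would invoke Proposition \ref{prop:Theorem3.2-in-HMP-restricted}: since $\{X_n:n\geq 1\}$ are i.i.d.\ with distribution function $F_{\theta_0}$, which by Condition \ref{cond:forMQestimator} satisfies the hypotheses of Condition \ref{cond:(*)inHMP-with-restriction} on $(\alpha_{\theta_0},\omega_{\theta_0})$, and $\{k_n:n\geq 1\}$ is as required, the sequence $\{X_{k_n:n}:n\geq 1\}$ satisfies the LDP on $(\alpha_{\theta_0},\omega_{\theta_0})$ with good rate function $I_{\lambda,F_{\theta_0}}(x)=H(\lambda\,|\,F_{\theta_0}(x))$ for $x\in(\alpha_{\theta_0},\omega_{\theta_0})$ and $\infty$ otherwise.

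Next I would apply the contraction principle (Theorem 4.2.1 in \cite{DemboZeitouni}), which is legitimate precisely because the rate function $I_{\lambda,F_{\theta_0}}$ is good and, by hypothesis, the relevant map is continuous on $(\alpha_{\theta_0},\omega_{\theta_0})$. The map to contract along is $g:=(F_{(\bullet)}^{-1}(\lambda))^{-1}$ restricted to $(\alpha_{\theta_0},\omega_{\theta_0})$, which is well-defined and single-valued by the uniqueness assumption in Condition \ref{cond:forMQestimator}. Since the estimators are exactly $g(X_{k_n:n})$, the contraction principle yields that they satisfy the LDP with good rate function
$$
\tilde I_{\lambda,\theta_0}(\theta)=\inf\left\{I_{\lambda,F_{\theta_0}}(x):x\in(\alpha_{\theta_0},\omega_{\theta_0}),\ g(x)=\theta\right\},
$$
where the infimum over the empty set is $\infty$.

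It then remains to identify this contracted rate function with the stated $I_{\lambda,\theta_0}$. The key observation is that $g$ is a bijection between $(\alpha_{\theta_0},\omega_{\theta_0})\cap\mathcal{M}$ and the set of admissible $\theta$: by definition $g(x)=\theta$ means $F_\theta^{-1}(\lambda)=x$, so the preimage is the single point $x=F_\theta^{-1}(\lambda)$, valid exactly when $F_\theta^{-1}(\lambda)\in(\alpha_{\theta_0},\omega_{\theta_0})$. Substituting this $x$ into $I_{\lambda,F_{\theta_0}}(x)=H(\lambda\,|\,F_{\theta_0}(x))$ and recalling the definition \eqref{eq:def-function-h} that $h_{\lambda,\theta_0}(\theta)=F_{\theta_0}(F_\theta^{-1}(\lambda))$, the infimum collapses to the single value $H(\lambda\,|\,h_{\lambda,\theta_0}(\theta))$, which by \eqref{eq:def-function-H} is precisely the displayed expression; for $\theta$ with $F_\theta^{-1}(\lambda)\notin(\alpha_{\theta_0},\omega_{\theta_0})$ the preimage is empty and the rate function is $\infty$, matching the stated formula.

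The main obstacle is bookkeeping rather than analytic depth: one must carefully verify that the continuity hypothesis makes the contraction principle applicable and that the single-valuedness from Condition \ref{cond:forMQestimator} guarantees the preimage in the contraction is a singleton, so that the infimum is trivially attained and no genuine minimization over a set is required. I would also take care to state that goodness of the rate function transfers through the contraction principle (continuous images of good rate functions under the contraction remain good), which is what yields the asserted goodness of $I_{\lambda,\theta_0}$.
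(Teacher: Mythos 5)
Your proposal is correct and follows essentially the same route as the paper's own proof: invoke the LDP for $\{X_{k_n:n}:n\geq 1\}$ from Proposition \ref{prop:Theorem3.2-in-HMP-restricted}, apply the contraction principle along the continuous map $(F_{(\bullet)}^{-1}(\lambda))^{-1}$, and observe that the preimage of each $\theta$ is either the singleton $\{F_\theta^{-1}(\lambda)\}$ or empty, so the infimum collapses to $H(\lambda|h_{\lambda,\theta_0}(\theta))$. The bookkeeping points you flag (goodness transferring through the contraction, single-valuedness from Condition \ref{cond:forMQestimator}) are exactly the ones the paper relies on.
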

\begin{proof}
Since the restriction of $\theta_\lambda(\cdot)$
on $(\alpha_{\theta_0},\omega_{\theta_0})$ is continuous, a
straightforward application of the contraction principle yields
the LDP of
$\left\{\theta_\lambda(X_{k_n:n}):n\geq 1\right\}$ with good rate
function $I_{\lambda,\theta_0}$ defined by
$$I_{\lambda,\theta_0}(\theta):=\inf\left\{I_{\lambda,F_{\theta_0}}(x):x\in(\alpha_{\theta_0},\omega_{\theta_0}),\theta_\lambda(x)=\theta\right\},$$ 
where $I_{\lambda,F_{\theta_0}}$ is the good rate function in
Proposition \ref{prop:Theorem3.2-in-HMP-restricted}, namely the
good rate function defined by
$I_{\lambda,F_{\theta_0}}(x):=H(\lambda|F_{\theta_0}(x))$, for
$x\in(\alpha_{\theta_0},\omega_{\theta_0})$. Moreover the set
$\left\{x\in(\alpha_{\theta_0},\omega_{\theta_0}):\theta_\lambda(x)=\theta\right\}$
has at most one element, namely
$$\left\{x\in(\alpha_{\theta_0},\omega_{\theta_0}):\theta_\lambda(x)=\theta\right\}=
\left\{\begin{array}{ll}
\{F_\theta^{-1}(\lambda)\}&\ \mbox{for}\ \theta\in\Theta\ \mbox{such that}\ F_\theta^{-1}(\lambda)\in(\alpha_{\theta_0},\omega_{\theta_0})\\
\emptyset&\ \mbox{otherwise};
\end{array}\right.$$
thus we have
$I_{\lambda,\theta_0}(\theta)=H(\lambda|F_{\theta_0}(F_\theta^{-1}(\lambda)))=H(\lambda|h_{\lambda,\theta_0}(\theta))$
for $\theta\in\Theta$ such that
$F_\theta^{-1}(\lambda)\in(\alpha_{\theta_0},\omega_{\theta_0})$,
and $I_{\lambda,\theta_0}(\theta)=\infty$ otherwise. The proof is
completed by taking into account the definition of the function $H$
in \eqref{eq:def-function-H}.
\end{proof}

\begin{remark}[Rate function invariance with respect to increasing transformations]\label{rem:invariance-wrt-increasing-transformations}
Let $\{F_\theta:\theta\in\Theta\}$ be a family of distribution
functions as in Condition \ref{cond:forMQestimator} and assume
that there exists an interval $(\alpha,\omega)$ such that
$(\alpha_\theta,\omega_\theta)=(\alpha,\omega)$ for all
$\theta\in\Theta$. Moreover let
$\psi:(\alpha,\omega)\to\mathbb{R}$ be a strictly increasing
function. Then, if we consider the MQ estimators based on the
sequence $\{\psi(X_n):n\geq 1\}$ instead of $\{X_n:n\geq 1\}$, we
can consider an adapted version of Proposition
\ref{prop:LD-for-MQ-estimators} with $(\psi(\alpha),\psi(\omega))$
in place of $(\alpha,\omega)$, $F_\theta\circ\psi^{-1}$ in place
of $F_\theta$ and, as stated in Property 1.5.16 in (Denuit et al.
2005), $\psi\circ F_\theta^{-1}$ in place of $F_\theta^{-1}$. The
LDP provided by this adapted version of Proposition
\ref{prop:LD-for-MQ-estimators} is governed by the rate function
$I_{\lambda,\theta_0;\psi}$ defined by
$$I_{\lambda,\theta_0;\psi}(\theta):=\left\{\begin{array}{ll}
H(\lambda|F_{\theta_0}\circ\psi^{-1}(\psi\circ
F_\theta^{-1}(\lambda)))
&\ \mbox{for}\ \theta\in\Theta\ \mbox{such that}\ \psi\circ F_\theta^{-1}(\lambda)\in(\psi(\alpha),\psi(\omega))\\
\infty&\ \mbox{otherwise}
\end{array}\right.$$
instead of
$$I_{\lambda,\theta_0}(\theta):=\left\{\begin{array}{ll}
H(\lambda|F_{\theta_0}(F_\theta^{-1}(\lambda)))
&\ \mbox{for}\ \theta\in\Theta\ \mbox{such that}\ F_\theta^{-1}(\lambda)\in(\alpha,\omega)\\
\infty&\ \mbox{otherwise}.
\end{array}\right.$$
One can easily realize that $I_{\lambda,\theta_0;\psi}$ and
$I_{\lambda,\theta_0}$ coincide.
\end{remark}

\begin{remark}\label{rem:motivation-of-restriction}
The proof of Proposition \ref{prop:LD-for-MQ-estimators} is based 
on Proposition \ref{prop:Theorem3.2-in-HMP-restricted}, which allows 
to consider only one-dimensional sequences of order statistics. For
this reason we focus on one-dimensional parameters only. The case of
multidimensional parameters would involve an  extended version of
Proposition \ref{prop:Theorem3.2-in-HMP-restricted} with
multidimensional sequences of order statistics, which is
non-trivial and left for future research.
\end{remark}

By taking into account the rate function in Proposition
\ref{prop:LD-for-MQ-estimators}, it would be interesting to
compare two rate functions $I_{\lambda_1,\theta_0}$ and
$I_{\lambda_2,\theta_0}$ in the spirit of Remark
\ref{rem:comparison-between-rfs} for a given pair
$\lambda_1,\lambda_2\in (0,1)$; namely it would be interesting to
have a strict inequality between $I_{\lambda_1,\theta_0}$ and
$I_{\lambda_2,\theta_0}$ in a neighborhood of $\theta_0$ (except
$\theta_0$).

Thus, if both rate functions are twice differentiable,
$I_{\lambda_1,\theta_0}$ is locally larger (resp. smaller) than
$I_{\lambda_2,\theta_0}$ around $\theta_0$ if we have
$I_{\lambda_1,\theta_0}^{\prime\prime}(\theta_0)>I_{\lambda_2,\theta_0}^{\prime\prime}(\theta_0)$
(resp.
$I_{\lambda_1,\theta_0}^{\prime\prime}(\theta_0)<I_{\lambda_2,\theta_0}^{\prime\prime}(\theta_0)$).
So it is natural to give an expression of
$I_{\lambda,\theta_0}^{\prime\prime}(\theta_0)$ under suitable
hypotheses.

\begin{proposition}[An expression for $I_{\lambda,\theta_0}^{\prime\prime}(\theta_0)$]\label{prop:second-derivative-I}
Let $I_{\lambda,\theta_0}$ be the rate function in Proposition
\ref{prop:LD-for-MQ-estimators}. Assume that $F_{\theta_0}(\cdot)$
and $F_{(\cdot)}^{-1}(\lambda)$ are twice differentiable. Then
$$I_{\lambda,\theta_0}^{\prime\prime}(\theta_0)=\frac{(h_{\lambda,\theta_0}^\prime(\theta_0))^2}{\lambda(1-\lambda)}
=\frac{\{F_{\theta_0}^\prime(F_{\theta_0}^{-1}(\lambda))\}^2}{\lambda(1-\lambda)}
\left(\left.\frac{d}{d\theta}F_\theta^{-1}(\lambda)\right|_{\theta=\theta_0}\right)^2,$$
where $h_{\lambda,\theta_0}(\theta)$ is defined by
\eqref{eq:def-function-h}.
\end{proposition}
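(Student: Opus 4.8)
The plan is to write the rate function as a composition of a one-variable function with $h_{\lambda,\theta_0}$ and to differentiate twice via the chain rule, exploiting the fact that $\theta_0$ is simultaneously a zero and a stationary point of $I_{\lambda,\theta_0}$. First I would record the value of $h_{\lambda,\theta_0}$ at the true parameter: since $F_{\theta_0}$ is continuous and strictly increasing on $(\alpha_{\theta_0},\omega_{\theta_0})$, we have $h_{\lambda,\theta_0}(\theta_0)=F_{\theta_0}(F_{\theta_0}^{-1}(\lambda))=\lambda$.

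Next I would introduce the single-variable function $g(q):=H(\lambda|q)=\lambda\log\frac{\lambda}{q}+(1-\lambda)\log\frac{1-\lambda}{1-q}$ for $q\in(0,1)$, so that $I_{\lambda,\theta_0}=g\circ h_{\lambda,\theta_0}$ on the set where the rate function is finite. A direct computation gives $g'(q)=-\frac{\lambda}{q}+\frac{1-\lambda}{1-q}$ and $g''(q)=\frac{\lambda}{q^2}+\frac{1-\lambda}{(1-q)^2}$. Evaluating at $q=\lambda$ yields the two facts that drive the whole argument: $g'(\lambda)=0$ and $g''(\lambda)=\frac{1}{\lambda}+\frac{1}{1-\lambda}=\frac{1}{\lambda(1-\lambda)}$.

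Then I would differentiate the composition. Writing $h=h_{\lambda,\theta_0}$ for brevity, the chain rule gives $I_{\lambda,\theta_0}''=g''(h)\,(h')^2+g'(h)\,h''$; the hypotheses that $F_{\theta_0}(\cdot)$ and $\theta\mapsto F_\theta^{-1}(\lambda)$ are twice differentiable guarantee that $h$ is twice differentiable, so this expansion is legitimate. Evaluating at $\theta_0$, the term $g'(h(\theta_0))\,h''(\theta_0)=g'(\lambda)\,h''(\theta_0)$ vanishes because $g'(\lambda)=0$; this is the crucial simplification, and it is exactly what lets me avoid ever computing the awkward second derivative $h''(\theta_0)$. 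What survives is $I_{\lambda,\theta_0}''(\theta_0)=g''(\lambda)\,(h'(\theta_0))^2=\frac{(h'(\theta_0))^2}{\lambda(1-\lambda)}$, which is the first claimed equality.

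Finally, to obtain the second equality I would compute $h'(\theta_0)$ explicitly. Differentiating $h_{\lambda,\theta_0}(\theta)=F_{\theta_0}(F_\theta^{-1}(\lambda))$ by the chain rule gives $h_{\lambda,\theta_0}'(\theta)=F_{\theta_0}'(F_\theta^{-1}(\lambda))\cdot\frac{d}{d\theta}F_\theta^{-1}(\lambda)$, and at $\theta=\theta_0$ the inner argument becomes $F_{\theta_0}^{-1}(\lambda)$, so $h'(\theta_0)=F_{\theta_0}'(F_{\theta_0}^{-1}(\lambda))\cdot\big(\frac{d}{d\theta}F_\theta^{-1}(\lambda)\big|_{\theta=\theta_0}\big)$; squaring and substituting into the first equality delivers the stated formula. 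The only genuine subtlety worth flagging as the main obstacle is the observation that $g'(\lambda)=0$, i.e. that $\theta_0$ is a stationary point; everything else is routine differentiation. This is not a coincidence but reflects the general fact that a rate function is minimized (and vanishes) at the limit point $\theta_0$, so its first derivative there must be zero.
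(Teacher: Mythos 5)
Your proof is correct and is essentially identical to the paper's: the paper also computes $I_{\lambda,\theta_0}'$ and $I_{\lambda,\theta_0}''$ by the chain rule through $h_{\lambda,\theta_0}$ (the factors $\frac{1-\lambda}{1-h}-\frac{\lambda}{h}$ and $\frac{\lambda}{h^2}+\frac{1-\lambda}{(1-h)^2}$ in its displayed formulas are exactly your $g'(h)$ and $g''(h)$) and then uses $h_{\lambda,\theta_0}(\theta_0)=\lambda$ to kill the term involving $h''$. Naming the outer function $g(q)=H(\lambda|q)$ explicitly is a tidy presentational choice, but the argument is the same.
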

\begin{proof}
One can easily check that
$$h_{\lambda,\theta_0}(\theta_0)=\lambda\ \mbox{and}\
h_{\lambda,\theta_0}^\prime(\theta_0)=F_{\theta_0}^\prime(F_{\theta_0}^{-1}(\lambda))\cdot
\left.\frac{d}{d\theta}F_\theta^{-1}(\lambda)\right|_{\theta=\theta_0}.$$
Moreover after some computations we get
$$I_{\lambda,\theta_0}^\prime(\theta)=h_{\lambda,\theta_0}^\prime(\theta)\left(\frac{1-\lambda}{1-h_{\lambda,\theta_0}(\theta)}
-\frac{\lambda}{h_{\lambda,\theta_0}(\theta)}\right)$$ and
$$I_{\lambda,\theta_0}^{\prime\prime}(\theta)=
h_{\lambda,\theta_0}^{\prime\prime}(\theta)\left(\frac{1-\lambda}{1-h_{\lambda,\theta_0}(\theta)}-\frac{\lambda}{h_{\lambda,\theta_0}(\theta)}\right)
+(h_{\lambda,\theta_0}^\prime(\theta))^2\left(\frac{\lambda}{h_{\lambda,\theta_0}^2(\theta)}+\frac{1-\lambda}{(1-h_{\lambda,\theta_0}(\theta))^2}\right).$$
Thus $I_{\lambda,\theta_0}^\prime(\theta_0)=0$ and
$I_{\lambda,\theta_0}^{\prime\prime}(\theta_0)=(h_{\lambda,\theta_0}^\prime(\theta_0))^2\left(\frac{1}{\lambda}+\frac{1}{1-\lambda}\right)=
\frac{(h_{\lambda,\theta_0}^\prime(\theta))^2}{\lambda(1-\lambda)}$.
The proof is completed by taking into account the expression of
$h_{\lambda,\theta_0}^\prime(\theta_0)$ above.
\end{proof}

Finally, by taking into account what we said before Proposition 
\ref{prop:second-derivative-I}, it is natural to consider the 
following

\begin{definition}\label{def:optimal-lambda}
A value $\lambda_{\mathrm{max}}\in(0,1)$ is said to be
\emph{optimal} if it maximizes
$I_{\lambda,\theta_0}^{\prime\prime}(\theta_0)$, namely if we have
$I_{\lambda_{\mathrm{max}},\theta_0}^{\prime\prime}(\theta_0)=\sup_{\lambda\in(0,1)}I_{\lambda,\theta_0}^{\prime\prime}(\theta_0)$.
\end{definition}

Note that in general $\lambda_{\mathrm{max}}$ in
Definition \ref{def:optimal-lambda} does not always exists; see
for instance Example \ref{ex:right-endpoint-parameter} presented below.

\section{Results for MM estimators}\label{sec:results-for-MMestimators}
The aim of this section is to present a version of the above
results for MM estimators; namely the LDP and an expression of
$J_{\theta_0}^{\prime\prime}(\theta_0)$, where $J_{\theta_0}$ is
the rate function which governs the LDP of MM estimators. In
particular, when we compare MM and MQ estimators in terms of speed
of convergence by referring to Remark
\ref{rem:comparison-between-rfs}, the value
$J_{\theta_0}^{\prime\prime}(\theta_0)$ will be compared with
$I_{\lambda,\theta_0}^{\prime\prime}(\theta_0)$ in Proposition
\ref{prop:second-derivative-I}.

We start with the following condition which allows us to define the
MM estimators.

\begin{condition}\label{cond:forMMestimator}
Let $\{F_\theta:\theta\in\Theta\}$ be a family of distribution
functions as in Condition \ref{cond:forMQestimator}, and
assume that it is well-defined the function
$\mu:\Theta\to\mathcal{M}$, where
$\mathcal{M}:=\bigcup_{\theta\in\Theta}(\alpha_\theta,\omega_\theta)$,
such that
$$\mu(\theta):=\int_{\alpha_\theta}^{\omega_\theta}xdF_\theta(x).$$
Moreover assume that, for all $m\in\mathcal{M}$,
the equation $\mu(\theta)=m$ admits a unique solution (with
respect to $\theta\in\Theta$) which will be denoted by
$\mu^{-1}(m)$.
\end{condition}

From now on, in connection with this condition, we introduce the
following function:
\begin{equation}\label{eq:rf-cramer-theorem}
\Lambda_\theta^*(x):=\sup_{\gamma\in\mathbb{R}}\left\{\gamma
x-\Lambda_\theta(\gamma)\right\},\ \mbox{where}\
\Lambda_\theta(\gamma):=\log\int_{\alpha_\theta}^{\omega_\theta}e^{\gamma
x}dF_\theta(x).
\end{equation}
It is well-known that, if $\{X_n:n\geq 1\}$ are i.i.d. random
variables with distribution function $F_\theta$, and if we set
$\bar{X}_n:=\frac{X_1+\cdots+X_n}{n}$ for all $n\geq 1$, then
$\{\bar{X}_n:n\geq 1\}$ satisfies the LDP with rate function
$\Lambda_\theta^*$ in \eqref{eq:rf-cramer-theorem} by Cram\'{e}r
Theorem on $\mathbb{R}$ (see e.g. Theorem 2.2.3 in (Dembo and
Zeitouni 1998)).

Then we have the following result.

\begin{proposition}[LD for MM estimators]\label{prop:LD-for-MM-estimators}
Assume that Condition \ref{cond:forMMestimator} holds. Moreover
assume that, for some $\theta_0\in\Theta$, $\{X_n:n\geq 1\}$ are
i.i.d. random variables with distribution function
$F_{\theta_0}$.\\
(i) If $\mu^{-1}(m):=c_1m+c_0$ for some $c_1,c_0\in\mathbb{R}$
such that $c_1\neq 0$, then $\{\mu^{-1}(\bar{X}_n):n\geq 1\}$
satisfies the LDP with rate function $J_{\theta_0}$ defined by
$$J_{\theta_0}(\theta):=\left\{\begin{array}{ll}
\Lambda_{\theta_0}^*(\mu(\theta))&\ \mbox{for}\ \theta\in\Theta\ \mbox{such that}\ \mu(\theta)\in(\alpha_{\theta_0},\omega_{\theta_0})\\
\infty&\ \mbox{otherwise}.
\end{array}\right.$$
(ii) If the restriction of $\mu^{-1}$ on
$(\alpha_{\theta_0},\omega_{\theta_0})$ is continuous and if
$\Lambda_{\theta_0}^*$ is a good rate function, the same LDP holds
and $J_{\theta_0}$ is a good rate function.
\end{proposition}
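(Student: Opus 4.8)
The goal is to transfer the LDP for the sample mean $\{\bar{X}_n:n\geq 1\}$, supplied by Cram\'er's Theorem, to the transformed sequence $\{\mu^{-1}(\bar{X}_n):n\geq 1\}$. The natural tool is the contraction principle, exactly as in the proof of Proposition \ref{prop:LD-for-MQ-estimators}, but the two parts of the statement reflect the fact that Cram\'er's rate function $\Lambda_{\theta_0}^*$ need not be good, so part (i) must be argued by hand under the affine assumption on $\mu^{-1}$, while part (i) handles the general continuous case under the extra goodness hypothesis.

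For part (i), the plan is to exploit the affine form $\mu^{-1}(m)=c_1 m+c_0$ with $c_1\neq 0$ directly. Since $m\mapsto c_1 m+c_0$ is an affine bijection of $\mathbb{R}$, it is a homeomorphism, and one can either invoke the contraction principle (which, for a continuous \emph{bijection}, transports the LDP without needing goodness, because the inverse image of a closed set is closed and of an open set is open) or simply verify the two bounds directly. Concretely, I would write $P(\mu^{-1}(\bar{X}_n)\in B)=P(\bar{X}_n\in \mu(B))$, where $\mu(B)=\{m:c_1 m+c_0\in B\}$ is the preimage under the affine map, and observe that $\mu(B)$ is open (resp.\ closed) whenever $B$ is. Applying the LDP for $\{\bar{X}_n\}$ to $\mu(B)$ and using the identity $\inf_{m\in\mu(B)}\Lambda_{\theta_0}^*(m)=\inf_{\theta\in B}\Lambda_{\theta_0}^*(\mu(\theta))$ (valid because $\mu$ is the inverse of the affine bijection) yields the required upper and lower bounds, giving the rate function $J_{\theta_0}(\theta)=\Lambda_{\theta_0}^*(\mu(\theta))$ on $\Theta$, with the value $\infty$ outside the range where $\mu(\theta)\in(\alpha_{\theta_0},\omega_{\theta_0})$. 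The contribution of $\infty$ off the effective domain follows from the fact that $\Lambda_{\theta_0}^*(x)=\infty$ for $x\notin\overline{(\alpha_{\theta_0},\omega_{\theta_0})}$, together with the convention identifying the infimum over the empty set.

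For part (i) I would invoke the contraction principle in its standard form (Theorem 4.2.1 in \cite{DemboZeitouni}): if $\{\bar{X}_n\}$ satisfies the LDP with the \emph{good} rate function $\Lambda_{\theta_0}^*$ and $\mu^{-1}$ restricted to $(\alpha_{\theta_0},\omega_{\theta_0})$ is continuous, then $\{\mu^{-1}(\bar{X}_n)\}$ satisfies the LDP with the good rate function obtained by contraction, $J_{\theta_0}(\theta)=\inf\{\Lambda_{\theta_0}^*(x):\mu^{-1}(x)=\theta\}$. As in Proposition \ref{prop:LD-for-MQ-estimators}, the uniqueness of the solution of $\mu(\theta)=m$ guaranteed by Condition \ref{cond:forMMestimator} collapses this infimum over a singleton (or the empty set), giving precisely $J_{\theta_0}(\theta)=\Lambda_{\theta_0}^*(\mu(\theta))$ on the effective domain and $\infty$ otherwise, with $J_{\theta_0}$ inheriting goodness from $\Lambda_{\theta_0}^*$.

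The main subtlety, and the reason the statement is split, is precisely the goodness issue. In part (i) one does \emph{not} assume $\Lambda_{\theta_0}^*$ is good, so the contraction principle in its usual form does not apply; the affine structure is what saves the argument, since an affine bijection carries open sets to open sets and closed sets to closed sets, letting one transport both large-deviation bounds without any compactness of level sets. The care required is therefore in checking that the preimage topology behaves well and that the effective-domain bookkeeping (the switch to $\infty$ when $\mu(\theta)\notin(\alpha_{\theta_0},\omega_{\theta_0})$) matches the stated formula; these are routine but must be stated cleanly to cover the boundary behavior of $\Lambda_{\theta_0}^*$.
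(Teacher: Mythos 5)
Your proposal is correct, and part (ii) matches the paper's argument exactly (contraction principle plus the observation that Condition \ref{cond:forMMestimator} collapses the infimum to a singleton or the empty set). Part (i), however, takes a genuinely different route. You transfer the LDP through the affine map by noting that an affine bijection of $\mathbb{R}$ is a homeomorphism, so preimages of open (resp.\ closed) sets are open (resp.\ closed) and both large-deviation bounds carry over with rate function $\Lambda_{\theta_0}^*\circ\mu$ without any goodness assumption; this is in effect the contraction principle for homeomorphisms, and it would work for any continuous $\mu^{-1}$ with continuous inverse, not just affine ones. The paper instead observes that $\mu^{-1}(\bar{X}_n)=c_1\bar{X}_n+c_0$ is itself the empirical mean of the i.i.d.\ variables $c_1X_i+c_0$, re-applies Cram\'er's Theorem to that sequence, and identifies the resulting Legendre transform as $\sup_\gamma\{c_1\gamma\,\mu(\theta)-\Lambda_{\theta_0}(c_1\gamma)\}=\Lambda_{\theta_0}^*(\mu(\theta))$ by the substitution $\gamma\mapsto c_1\gamma$. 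The paper's computation buys an explicit identification of the rate function as a Legendre transform with no topological argument at all, while yours is more flexible but requires you to state cleanly (as you do) why lower semicontinuity and the two bounds survive the change of variables, and to handle the bookkeeping of where $\Lambda_{\theta_0}^*$ is infinite so that the stated case distinction for $J_{\theta_0}$ is recovered. (Minor slip: in two places you write \lq\lq part (i)\rq\rq\ where you clearly mean part (ii); the content makes the intent unambiguous.)
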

\begin{proof}
(i) In this case $\mu(\theta):=\frac{\theta-c_0}{c_1}$ and
$\{\mu^{-1}(\bar{X}_n):n\geq 1\}$ is again a sequence of empirical
means of i.i.d. random variables. Then the LDP still holds by
Cram\'{e}r Theorem on $\mathbb{R}$, and the rate function
$J_{\theta_0}$ is defined by
$$J_{\theta_0}(\theta):=\sup_{\gamma\in\mathbb{R}}\left\{\gamma\theta-\Lambda_{\theta_0}(c_1\gamma)-\gamma c_0\right\},$$
which yields
$$J_{\theta_0}(\theta)=\sup_{\gamma\in\mathbb{R}}\left\{c_1\gamma\frac{\theta-c_0}{c_1}-\Lambda_{\theta_0}(c_1\gamma)\right\}
=\sup_{\gamma\in\mathbb{R}}\left\{c_1\gamma\mu(\theta)-\Lambda_{\theta_0}(c_1\gamma)\right\}=\Lambda_{\theta_0}^*(\mu(\theta)),$$
as desired.\\
(ii) Since the restriction of the function $\mu^{-1}$ on
$(\alpha_{\theta_0},\omega_{\theta_0})$ is continuous and
$\Lambda_{\theta_0}^*$ is a good rate function, a straightforward
application of the contraction principle yields the LDP of
$\left\{\mu^{-1}(\bar{X}_n):n\geq 1\right\}$ with good rate
function $J_{\theta_0}$ defined by
$$J_{\theta_0}(\theta):=\inf\left\{\Lambda_{\theta_0}^*(x):x\in(\alpha_{\theta_0},\omega_{\theta_0}),\mu^{-1}(x)=\theta\right\}.$$
Moreover the set
$\left\{x\in(\alpha_{\theta_0},\omega_{\theta_0}):\mu^{-1}(x)=\theta\right\}$
has at most one element, namely
$$\left\{x\in(\alpha_{\theta_0},\omega_{\theta_0}):\mu^{-1}(x)=\theta\right\}=
\left\{\begin{array}{ll}
\{\mu(\theta)\}&\ \mbox{for}\ \theta\in\Theta\ \mbox{such that}\ \mu(\theta)\in(\alpha_{\theta_0},\omega_{\theta_0})\\
\emptyset&\ \mbox{otherwise};
\end{array}\right.$$
thus we have
$J_{\theta_0}(\theta)=\Lambda_{\theta_0}^*(\mu(\theta))$ for
$\theta\in\Theta$ such that
$\mu(\theta)\in(\alpha_{\theta_0},\omega_{\theta_0})$, and
$J_{\theta_0}(\theta)=\infty$ otherwise.
\end{proof}

Now, in the spirit of Remark \ref{rem:comparison-between-rfs}, it
would be interesting to have a local strict inequality between the
rate function $I_{\lambda,\theta_0}$ in Proposition 
\ref{prop:LD-for-MQ-estimators} for MQ estimators (for some 
$\lambda\in(0,1)$), and the rate function $J_{\theta_0}$ in 
Proposition \ref{prop:LD-for-MM-estimators} for MM estimators.

Then we can repeat the same arguments which led us to Proposition
\ref{prop:second-derivative-I}. Namely, if both rate functions
$J_{\theta_0}$ and $I_{\lambda,\theta_0}$ (for some
$\lambda\in(0,1)$) are twice differentiable, $J_{\theta_0}$ is
locally larger (resp. smaller) than $I_{\lambda,\theta_0}$ around
$\theta_0$ if
$J_{\theta_0}^{\prime\prime}(\theta_0)>I_{\lambda,\theta_0}^{\prime\prime}(\theta_0)$
(resp.
$J_{\theta_0}^{\prime\prime}(\theta_0)<I_{\lambda,\theta_0}^{\prime\prime}(\theta_0)$).
So it is natural to give an expression of
$J_{\theta_0}^{\prime\prime}(\theta_0)$ under suitable hypotheses.

\begin{proposition}[An expression for $J_{\theta_0}^{\prime\prime}(\theta_0)$]\label{prop:second-derivative-J}
Let $J_{\theta_0}$ be the rate function in Proposition
\ref{prop:LD-for-MM-estimators}. Assume that, for all
$\theta\in\Theta$, the function $\Lambda_\theta$ in
\eqref{eq:rf-cramer-theorem} is finite in a neighborhood of the
origin $\gamma=0$ and that $\mu(\cdot)$ is twice differentiable.
Then
$J_{\theta_0}^{\prime\prime}(\theta_0)=\frac{(\mu^\prime(\theta_0))^2}{\sigma^2(\theta_0)}$,
where
$\sigma^2(\theta):=\int_{\alpha_\theta}^{\omega_\theta}x^2dF_\theta(x)-\mu^2(\theta)$
is the variance function.
\end{proposition}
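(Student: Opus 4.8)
The plan is to compute the second derivative of $J_{\theta_0}$ at $\theta_0$ directly from the representation $J_{\theta_0}(\theta)=\Lambda_{\theta_0}^*(\mu(\theta))$ given in Proposition \ref{prop:LD-for-MM-estimators}, exploiting the fact that $\Lambda_{\theta_0}^*$ is the Legendre--Fenchel transform of the smooth convex function $\Lambda_{\theta_0}$. The composite structure is exactly analogous to the MQ case, so I would mirror the proof of Proposition \ref{prop:second-derivative-I}: first differentiate the composition via the chain rule, then evaluate at $\theta=\theta_0$, where the first derivative vanishes and the second-derivative formula simplifies dramatically.

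First I would record the key properties of the rate function $\Lambda_{\theta_0}^*$ at its minimizer. Since $\Lambda_{\theta_0}$ is finite near the origin, it is smooth there and $\Lambda_{\theta_0}^\prime(0)=\mu(\theta_0)$ (the mean under $F_{\theta_0}$) while $\Lambda_{\theta_0}^{\prime\prime}(0)=\sigma^2(\theta_0)$ (the variance). The Legendre transform $\Lambda_{\theta_0}^*$ therefore vanishes at the point $x_0:=\mu(\theta_0)$, satisfies $(\Lambda_{\theta_0}^*)^\prime(x_0)=0$, and by the standard duality for the second derivatives of a convex function and its transform one has $(\Lambda_{\theta_0}^*)^{\prime\prime}(x_0)=\frac{1}{\Lambda_{\theta_0}^{\prime\prime}(0)}=\frac{1}{\sigma^2(\theta_0)}$. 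These three facts are the analogue of the relations $h_{\lambda,\theta_0}(\theta_0)=\lambda$ and $I_{\lambda,\theta_0}^\prime(\theta_0)=0$ used before.

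Next I would apply the chain rule to $J_{\theta_0}=\Lambda_{\theta_0}^*\circ\mu$. Writing $x=\mu(\theta)$ we get
$$
J_{\theta_0}^\prime(\theta)=(\Lambda_{\theta_0}^*)^\prime(\mu(\theta))\,\mu^\prime(\theta),
$$
$$
J_{\theta_0}^{\prime\prime}(\theta)=(\Lambda_{\theta_0}^*)^{\prime\prime}(\mu(\theta))\,(\mu^\prime(\theta))^2
+(\Lambda_{\theta_0}^*)^\prime(\mu(\theta))\,\mu^{\prime\prime}(\theta).
$$
Evaluating at $\theta=\theta_0$, the term carrying $\mu^{\prime\prime}(\theta_0)$ is annihilated because $(\Lambda_{\theta_0}^*)^\prime(x_0)=0$, and substituting $(\Lambda_{\theta_0}^*)^{\prime\prime}(x_0)=1/\sigma^2(\theta_0)$ yields
$$
J_{\theta_0}^{\prime\prime}(\theta_0)=\frac{(\mu^\prime(\theta_0))^2}{\sigma^2(\theta_0)},
$$
which is the claimed expression.

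The main obstacle is justifying the duality identity $(\Lambda_{\theta_0}^*)^{\prime\prime}(x_0)=1/\Lambda_{\theta_0}^{\prime\prime}(0)$ rigorously, i.e.\ establishing that $\Lambda_{\theta_0}^*$ is genuinely twice differentiable at $x_0$ with the stated value. This requires knowing that the supremum defining $\Lambda_{\theta_0}^*(x)$ is attained at an interior point $\gamma(x)$ solving $\Lambda_{\theta_0}^\prime(\gamma)=x$, and that $\gamma(x)$ is a smooth function of $x$ near $x_0$ via the inverse function theorem (valid since $\Lambda_{\theta_0}^{\prime\prime}(0)=\sigma^2(\theta_0)>0$, assuming the variance is nonzero). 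Differentiating the envelope relation $\Lambda_{\theta_0}^*(x)=x\,\gamma(x)-\Lambda_{\theta_0}(\gamma(x))$ twice and using $\Lambda_{\theta_0}^\prime(\gamma(x))=x$ then gives $(\Lambda_{\theta_0}^*)^\prime(x)=\gamma(x)$ and $(\Lambda_{\theta_0}^*)^{\prime\prime}(x)=\gamma^\prime(x)=1/\Lambda_{\theta_0}^{\prime\prime}(\gamma(x))$, which at $x=x_0$ (where $\gamma(x_0)=0$) is the desired value. The finiteness of $\Lambda_{\theta_0}$ near the origin is precisely what guarantees this smoothness and the existence of all needed moments.
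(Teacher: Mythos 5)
Your proof is correct and follows essentially the same route as the paper's: apply the chain rule to $J_{\theta_0}=\Lambda_{\theta_0}^*\circ\mu$, evaluate at $\theta_0$ where $(\Lambda_{\theta_0}^*)^\prime(\mu(\theta_0))=0$ kills the $\mu^{\prime\prime}$ term, and use $(\Lambda_{\theta_0}^*)^{\prime\prime}(\mu(\theta_0))=1/\sigma^2(\theta_0)$. The only difference is that you spell out the Legendre-duality justification of the latter identity (via the smoothness of the maximizing $\gamma(x)$ and the inverse function theorem), which the paper simply asserts; this is a welcome addition, not a divergence.
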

\begin{proof}
One can easily check that
$$J_{\theta_0}^\prime(\theta)=(\Lambda_{\theta_0}^*)^\prime(\mu(\theta))\mu^\prime(\theta)\
\mbox{and}\
J_{\theta_0}^{\prime\prime}(\theta)=(\Lambda_{\theta_0}^*)^{\prime\prime}(\mu(\theta))(\mu^\prime(\theta))^2+\mu^{\prime\prime}(\theta)
(\Lambda_{\theta_0}^*)^\prime(\mu(\theta)).$$ Then, since we have
$(\Lambda_{\theta_0}^*)^{\prime\prime}(\mu(\theta_0))=\frac{1}{\sigma^2(\theta_0)}$
and $(\Lambda_{\theta_0}^*)^\prime(\mu(\theta_0))=0$ (this
equalities are well-known, and can be easily checked), we
immediately get the desired equality
$J_{\theta_0}^{\prime\prime}(\theta_0)=\frac{(\mu^\prime(\theta_0))^2}{\sigma^2(\theta_0)}$.
\end{proof}

\begin{remark}[On the functions $\Lambda_\theta$ and $\Lambda_\theta^*$ in \eqref{eq:rf-cramer-theorem}]\label{rem:goodness-hypothesis}
The function $\Lambda_\theta$ is finite in a neighborhood of the
origin $\gamma=0$ when we deal with empirical means (of i.i.d.
random variables) with light-tailed distribution;
in this case $\Lambda_\theta^*$ is a good rate function. On the
contrary, if we deal with i.i.d. random variables with heavy
tailed distributions, the function $\Lambda_\theta$ is not finite
in a neighborhood of the origin $\gamma=0$ and $\Lambda_\theta^*$
is not good.
\end{remark}

\section{Examples}\label{sec:examples}
The aim of this section is to present several examples of
statistical models with unknown parameter $\theta\in\Theta$, where
$\Theta\subset\mathbb{R}$; in all the examples we always deal with
one-dimensional parameters assuming all the others to be known.

Let us briefly introduce the examples presented below. We
investigate distributions with scale parameter in Example
\ref{ex:scale-parameter}, with location parameter in Example
\ref{ex:location-parameter}, and with skewness parameter in
Example \ref{ex:skew-parameter}. We remark that in Example
\ref{ex:skew-parameter} we use the epsilon-Skew-Normal
distribution defined in (Mudholkar and Hutson 2000); this choice
is motivated by the
availability of an explicit expression of the inverse of the
distribution function giving us the possibility of obtaining
explicit formulas. Moreover we present Example
\ref{ex:Pareto-distributions} with Pareto distributions, which
allows to give a concrete illustration of the content of Remark
\ref{rem:invariance-wrt-increasing-transformations}. In all these
statistical models the intervals
$\{(\alpha_\theta,\omega_\theta):\theta\in\Theta\}$ do not depend
on $\theta$ and we simply write $(\alpha,\omega)$. Finally we
present Example \ref{ex:right-endpoint-parameter} where we have
$(\alpha_\theta,\omega_\theta)=(0,\theta)$ for
$\theta\in\Theta:=(0,\infty)$; namely for this example $\theta$ is
a right-endpoint parameter.

In all examples (except Example \ref{ex:Pareto-distributions}) we
give a formula for $I_{\lambda,\theta_0}^{\prime\prime}(\theta_0)$
(as a consequence of Proposition \ref{prop:second-derivative-I})
which will be used for the local comparisons between rate
functions (in the spirit of Remark
\ref{rem:comparison-between-rfs}) analyzed in Section
\ref{sec:local-comparison-for-examples}.

In what follows we say that a distribution function $F$ on
$\mathbb{R}$ has the \emph{symmetry property} if it is a
distribution function of a symmetric random variable, i.e. if
$F(x)=1-F(-x)$ for all $x\in\mathbb{R}$. In such a case we have
$F^{-1}(\lambda)=-F^{-1}(1-\lambda)$ for all $\lambda\in(0,1)$.

\begin{example}[Statistical model with a scale parameter $\theta\in\Theta:=(0,\infty)$]\label{ex:scale-parameter}
Let $F_\theta$ be defined by
$$F_\theta(x):=G\left(\frac{x}{\theta}\right)\ \mbox{for}\ x\in(\alpha,\omega),$$
where $G$ is a strictly increasing distribution function on
$(\alpha,\omega)=(0,\infty)$ or
$(\alpha,\omega)=(-\infty,\infty)$. Then
$$F_\theta^{-1}(\lambda):=\theta G^{-1}(\lambda)\ \mbox{and}
\ h_{\lambda,\theta_0}(\theta)=G\left(\frac{\theta}{\theta_0}\cdot
G^{-1}(\lambda)\right);$$ it is important to remark that, when
$(\alpha,\omega)=(-\infty,\infty)$, the value $\lambda=G(0)$
(which yields $G^{-1}(\lambda)=0$) is not allowed. Now we give a
list of some specific examples studied in this paper.

For the case $(\alpha,\omega)=(0,\infty)$ we consider the Weibull
distribution:
\begin{equation}\label{eq:df-Weibull}
G(x):=1-\exp(-x^{\rho})\ (\mbox{where}\ \rho>0)\ \mbox{and}\
G^{-1}(\lambda):=\left(-\log(1-\lambda)\right)^{1/\rho}.
\end{equation}
We also give some specific examples where
$(\alpha,\omega)=(-\infty,\infty)$ and, in each case,
$\eta\in\mathbb{R}$ is a known location parameter (and the
not-allowed value $\lambda=G(0)$ depends on $\eta$): the Normal
distribution
\begin{equation}\label{eq:df-Normal-scale-parameter}
G(x):=\Phi(x-\eta)\ \mbox{and}\
G^{-1}(\lambda):=\eta+\Phi^{-1}(\lambda),
\end{equation}
where $\Phi$ is the standard Normal distribution function;
the Cauchy distribution
\begin{equation}\label{eq:df-Cauchy-scale-parameter}
G(x):=\frac{1}{\pi}\left(\arctan(x-\eta)+\frac{\pi}{2}\right)\
\mbox{and}\
G^{-1}(\lambda):=\eta+\tan\left(\left(\lambda-\frac{1}{2}\right)\pi\right);
\end{equation}
the logistic distribution
\begin{equation}\label{eq:df-logistic-scale-parameter}
G(x):=\frac{1}{1+e^{-(x-\eta)}}\ \mbox{and}\
G^{-1}(\lambda):=\eta-\log\left(\frac{1}{\lambda}-1\right);
\end{equation}
the Gumbel distribution
\begin{equation}\label{eq:df-Gumbel-scale-parameter}
G(x):=\exp(-e^{-(x-\eta)})\ \mbox{and}\
G^{-1}(\lambda):=\eta-\log(-\log\lambda).
\end{equation}
If $G$ is twice differentiable we have
\begin{equation}\label{eq:second-derivative-scale-parameter}
I_{\lambda,\theta_0}^{\prime\prime}(\theta_0)=\frac{\{G^\prime(G^{-1}(\lambda))G^{-1}(\lambda)\}^2}{\lambda(1-\lambda)\theta_0^2}
\end{equation}
by Proposition \ref{prop:second-derivative-I}; so, if it is
possible to find an optimal $\lambda_{\mathrm{max}}$, such a value
does not depend on $\theta_0$ (on the contrary it could depend on
the known location parameter $\eta$ as we shall see in Section
\ref{sec:local-comparison-for-examples}). Moreover one can check
that $I_{\lambda,\theta_0}^{\prime\prime}(\theta_0)=0$ if we
consider the not-allowed value $\lambda=G(0)\in(0,1)$ (when
$(\alpha,\omega)=(-\infty,\infty)$) because $G^{-1}(\lambda)=0$,
and that
$I_{\lambda,\theta_0}^{\prime\prime}(\theta_0)=I_{1-\lambda,\theta_0}^{\prime\prime}(\theta_0)$
(for all $\lambda\in(0,1)$) if $G$ is symmetric as it happens, for
instance, in \eqref{eq:df-Normal-scale-parameter},
\eqref{eq:df-Cauchy-scale-parameter} and
\eqref{eq:df-logistic-scale-parameter} with $\eta=0$.
\end{example}

\begin{example}[Statistical model with a location parameter $\theta\in\Theta:=(-\infty,\infty)$]\label{ex:location-parameter}
Let $F_\theta$ be defined by
$$F_\theta(x):=G(x-\theta)\ \mbox{for}\ x\in(\alpha,\omega)=(-\infty,\infty),$$
where $G$ is a strictly increasing distribution function on
$(\alpha,\omega)=(-\infty,\infty)$. Then
$$F_\theta^{-1}(\lambda):=\theta+G^{-1}(\lambda)\ \mbox{and}
\ h_{\lambda,\theta_0}(\theta)=G\left(\theta+G^{-1}(\lambda)-\theta_0\right).$$
We give some specific examples studied in this paper and, in each
case, $s>0$ is a known scale parameter: the Normal distribution
\begin{equation}\label{eq:df-Normal-location-parameter}
G(x):=\Phi\left(\frac{x}{s}\right)\ \mbox{and}\
G^{-1}(\lambda):=s\cdot\Phi^{-1}(\lambda);
\end{equation}
the Cauchy distribution
\begin{equation}\label{eq:df-Cauchy-location-parameter}
G(x):=\frac{1}{\pi}\left(\arctan\frac{x}{s}+\frac{\pi}{2}\right)\
\mbox{and}\
G^{-1}(\lambda):=s\cdot\tan\left(\left(\lambda-\frac{1}{2}\right)\pi\right);
\end{equation}
the logistic distribution
\begin{equation}\label{eq:df-logistic-location-parameter}
G(x):=\frac{1}{1+e^{-x/s}}\ \mbox{and}\
G^{-1}(\lambda):=-s\cdot\log\left(\frac{1}{\lambda}-1\right);
\end{equation}
the Gumbel distribution
\begin{equation}\label{eq:df-Gumbel-location-parameter}
G(x):=\exp(-e^{-x/s})\ \mbox{and}\
G^{-1}(\lambda):=-s\cdot\log(-\log\lambda).
\end{equation}
If $G$ is twice differentiable we have
\begin{equation}\label{eq:second-derivative-location-parameter}
I_{\lambda,\theta_0}^{\prime\prime}(\theta_0)=\frac{\{G^\prime(G^{-1}(\lambda))\}^2}{\lambda(1-\lambda)}
\end{equation}
by Proposition \ref{prop:second-derivative-I}; so, if it is
possible to find an optimal $\lambda_{\mathrm{max}}$, such a value
does not depend on $\theta_0$ and on the known scale parameter
$s$. Moreover one can check that
$I_{\lambda,\theta_0}^{\prime\prime}(\theta_0)=I_{1-\lambda,\theta_0}^{\prime\prime}(\theta_0)$
(for all $\lambda\in(0,1)$) if $G$ has the symmetry property (as
happens for $G$ in \eqref{eq:df-Normal-location-parameter},
\eqref{eq:df-Cauchy-location-parameter} and
\eqref{eq:df-logistic-location-parameter}, and not for $G$ in
\eqref{eq:df-Gumbel-location-parameter}).
\end{example}

\begin{example}[Statistical model with a skewness parameter $\theta\in\Theta:=(-1,1)$]\label{ex:skew-parameter}
Let $F_\theta$ be defined by
$$F_\theta(x):=\left\{\begin{array}{ll}
(1+\theta)G(\frac{x}{1+\theta})&\ \mbox{for}\ x\leq 0\\
\theta+(1-\theta)G(\frac{x}{1-\theta})&\ \mbox{for}\ x>0,
\end{array}\right.\ \mbox{with}\ x\in(\alpha,\omega)=(-\infty,\infty),$$
where $G$ is a strictly increasing distribution function on
$(\alpha,\omega)=(-\infty,\infty)$ with the symmetry property.
Then
$$F_\theta^{-1}(\lambda):=\left\{\begin{array}{ll}
(1+\theta)G^{-1}(\frac{\lambda}{1+\theta})&\ \mbox{for}\ \lambda\in(0,\frac{1+\theta}{2}]\\
(1-\theta)G^{-1}(\frac{\lambda-\theta}{1-\theta})&\ \mbox{for}\
\lambda\in(\frac{1+\theta}{2},1)
\end{array}\right.$$
and
$$h_{\lambda,\theta_0}(\theta)=\left\{\begin{array}{ll}
(1+\theta_0)G\left(\frac{1+\theta}{1+\theta_0}G^{-1}(\frac{\lambda}{1+\theta})\right)&\ \mbox{for}\ \theta\geq 2\lambda-1\\
\theta_0+(1-\theta_0)G\left(\frac{1-\theta}{1-\theta_0}G^{-1}(\frac{\lambda-\theta}{1-\theta})\right)&\
\mbox{for}\ \theta<2\lambda-1.
\end{array}\right.$$
We can consider the same specific examples presented in Example
\ref{ex:location-parameter}, i.e. the functions $G$ in
\eqref{eq:df-Normal-location-parameter},
\eqref{eq:df-Cauchy-location-parameter} and
\eqref{eq:df-logistic-location-parameter} for some known scale
parameter $s>0$.

If $G$ is twice differentiable and $G^{\prime\prime}(0)=0$ we have
$$I_{\lambda,\theta_0}^{\prime\prime}(\theta_0)=\left\{\begin{array}{ll}
\frac{1}{\lambda(1-\lambda)}\left[G^\prime(G^{-1}(\frac{\lambda}{1+\theta_0}))
\left(G^{-1}(\frac{\lambda}{1+\theta_0})-\frac{\lambda}{1+\theta_0}(G^{-1})^\prime(\frac{\lambda}{1+\theta_0})\right)\right]^2
&\ \mbox{for}\ \lambda\in(0,\frac{1+\theta_0}{2}]\\
\frac{1}{\lambda(1-\lambda)}\left[G^\prime(G^{-1}(\frac{\lambda-\theta_0}{1-\theta_0}))
\left(-G^{-1}(\frac{\lambda-\theta_0}{1-\theta_0})+\frac{\lambda-1}{1-\theta_0}(G^{-1})^\prime(\frac{\lambda-\theta_0}{1-\theta_0})\right)\right]^2
&\ \mbox{for}\ \lambda\in(\frac{1+\theta_0}{2},1)
\end{array}\right.$$
by Proposition \ref{prop:second-derivative-I}, and therefore
\begin{equation}\label{eq:second-derivative-skew-parameter}
I_{\lambda,\theta_0}^{\prime\prime}(\theta_0)=\left\{\begin{array}{ll}
\frac{1}{\lambda(1-\lambda)}\left[G^\prime(G^{-1}(\frac{\lambda}{1+\theta_0}))G^{-1}(\frac{\lambda}{1+\theta_0})-\frac{\lambda}{1+\theta_0}\right]^2
&\ \mbox{for}\ \lambda\in(0,\frac{1+\theta_0}{2}]\\
\frac{1}{\lambda(1-\lambda)}\left[-G^\prime(G^{-1}(\frac{\lambda-\theta_0}{1-\theta_0}))
G^{-1}(\frac{\lambda-\theta_0}{1-\theta_0})+\frac{\lambda-1}{1-\theta_0}\right]^2
&\ \mbox{for}\ \lambda\in(\frac{1+\theta_0}{2},1);
\end{array}\right.
\end{equation}
so one can expect that, if it is possible to find an optimal
$\lambda_{\mathrm{max}}$, such a value depends on $\theta_0$ (this
is what happens in Section
\ref{sec:local-comparison-for-examples}). Moreover one can check
that $I_{\lambda,\theta_0}^{\prime\prime}(\theta_0)$ in
\eqref{eq:second-derivative-skew-parameter} does not depend on
$s$, $I_{1/2,0}^{\prime\prime}(0)=1$ and
$I_{\lambda,\theta_0}^{\prime\prime}(\theta_0)=I_{1-\lambda,-\theta_0}^{\prime\prime}(-\theta_0)$
(for all $\lambda\in(0,1)$).
\end{example}

\begin{example}[Statistical model with Pareto distributions with $\theta\in\Theta:=(0,\infty)$]\label{ex:Pareto-distributions}
Let $F_\theta$ be defined by
$$F_\theta(x):=1-x^{-1/\theta}\ \mbox{for}\ x\in(\alpha,\omega)=(1,\infty).$$
Then
\begin{equation}\label{eq:basic-positions-Pareto-distributions}
F_\theta^{-1}(\lambda):=e^{-\theta\log(1-\lambda)}=(1-\lambda)^{-\theta}\
\mbox{and}\ h_{\lambda,\theta_0}(\theta)=1-(1-\lambda)^{\theta/\theta_0}.
\end{equation}
We remark that, if we consider Example \ref{ex:scale-parameter}
with $G$ as in \eqref{eq:df-Weibull} with $\rho=1$, namely
$$\tilde{F}_\theta(x)=1-e^{-x}\ \mbox{for}\ (\tilde{\alpha},\tilde{\omega})=(0,\infty),$$
we can refer to Remark
\ref{rem:invariance-wrt-increasing-transformations} with
$$\psi(x):=e^x\ \mbox{for}\ x\in(\tilde{\alpha},\tilde{\omega}):=(0,\infty)$$
(note that
$(\psi(\tilde{\alpha}),\psi(\tilde{\omega}))=(1,\infty)=(\alpha,\omega)$).
Then, as pointed out in Remark
\ref{rem:invariance-wrt-increasing-transformations},
$I_{\lambda,\theta_0;\psi}$ and $I_{\lambda,\theta_0}$ coincide; in fact, if 
we consider
$\tilde{F}_{\theta_0}(\tilde{F}_\theta^{-1}(\lambda))=G\left(\frac{\theta}{\theta_0}\cdot
G^{-1}(\lambda)\right)$ with $G$ as in \eqref{eq:df-Weibull} with
$\rho=1$, we obtain
$\tilde{F}_{\theta_0}(\tilde{F}_\theta^{-1}(\lambda))=1-(1-\lambda)^{\theta/\theta_0}$
(which coincides with $h_{\lambda,\theta_0}(\theta)$ in
\eqref{eq:basic-positions-Pareto-distributions}).
\end{example}

\begin{example}[Statistical model with a \lq\lq right endpoint\rq\rq parameter $\theta\in\Theta:=(0,\infty)$]\label{ex:right-endpoint-parameter}
Let $F_\theta$ be defined by
$$F_\theta(x):=\frac{G(x)}{G(\theta)}\ \mbox{for}\ x\in(\alpha_\theta,\omega_\theta):=(0,\theta),$$
where $G:[0,\infty)\to[0,\infty)$ is a strictly increasing
function such that $G(0)=0$. Then
$$F_\theta^{-1}(\lambda):=G^{-1}(\lambda G(\theta))\ \mbox{and}
\ h_{\lambda,\theta_0}(\theta)=\frac{\lambda G(\theta)}{G(\theta_0)}\ (\mbox{for}\ \lambda G(\theta)\in(0,G(\theta_0))).$$ 
Moreover, after some computations, we get
$$I_{\lambda,\theta_0}(\theta):=\left\{\begin{array}{ll}
\lambda\log\frac{G(\theta_0)}{G(\theta)}
+(1-\lambda)\log\frac{(1-\lambda)G(\theta_0)}{G(\theta_0)-\lambda
G(\theta)}
&\ \mbox{for}\ 0<\theta<G^{-1}\left(\frac{G(\theta_0)}{\lambda}\right)\\
\infty&\ \mbox{otherwise}.
\end{array}\right.$$
As a specific example we can consider $G(x)=x$ (for all $x$); in
such a case $F_\theta$ is the distribution function concerning the
uniform distribution on $(0,\theta)$. Finally, if $G$ is twice
differentiable, we have
\begin{equation}\label{eq:second-derivative-right-endpoint-parameter}
I_{\lambda,\theta_0}^{\prime\prime}(\theta_0)=\frac{\lambda(G^\prime(\theta_0))^2}{(1-\lambda)G^2(\theta_0)}
\end{equation}
by Proposition \ref{prop:second-derivative-I}.
\end{example}

\section{Local comparisons between rate functions for some examples}\label{sec:local-comparison-for-examples}
In this section we analyze the examples presented in Section
\ref{sec:examples}. We consider local comparisons
between rate functions in the spirit of Remark
\ref{rem:comparison-between-rfs} and, more precisely, the
following two issues.
\begin{itemize}
\item A discussion on the choice of optimal values of $\lambda$
in order to get the best rate of convergence. More precisely we
study the behavior of
$I_{\lambda,\theta_0}^{\prime\prime}(\theta_0)$ (varying
$\lambda\in(0,1)$) in order to find an optimal
$\lambda_{\mathrm{max}}$ in the sense of Definition
\ref{def:optimal-lambda}.
\item The comparison of the convergence of the MQ estimators and
of the MM estimators. More precisely, when we deal with MM
estimators, we compare
$I_{\lambda,\theta_0}^{\prime\prime}(\theta_0)$ in Proposition
\ref{prop:second-derivative-I} (for some $\lambda\in(0,1)$) and
$J_{\theta_0}^{\prime\prime}(\theta_0)$ in Proposition
\ref{prop:second-derivative-J}; obviously, when we have an optimal
$\lambda_{\mathrm{max}}$, we take
$\lambda=\lambda_{\mathrm{max}}$. In the single case presented
below where MM estimators are not defined, we compare the
convergence of the MQ estimators and of suitable GMM estimators.
\end{itemize}

We find at least an optimal $\lambda_{\mathrm{max}}$ for all
examples except for Example \ref{ex:right-endpoint-parameter}
(where we should consider $\lambda=1$). We present several examples 
where $(\alpha,\omega)=(-\infty,\infty)$ and we see that
$I_{\lambda,\theta_0}^{\prime\prime}(\theta_0)=I_{1-\lambda,\theta_0}^{\prime\prime}(\theta_0)$
for all $\lambda\in(0,1)$. For these examples we have the optimal
value $\lambda_{\mathrm{max}}=1/2$ or, by symmetry, two distinct
optimal values $\lambda_{\mathrm{max},1}$ and
$\lambda_{\mathrm{max},2}=1-\lambda_{\mathrm{max},1}$. However, in
general, we do not expect to have at most two values for
$\lambda_{\mathrm{max}}$.

In view of what follows it is useful to consider two suitable
values $\tilde{\lambda}_1,\tilde{\lambda}_2\in(0,1)$ presented in
the next Lemma \ref{lem:particular-values}. The value
$\tilde{\lambda}_1$ appears in the computations for the Weibull
distribution in Example \ref{ex:scale-parameter} (and also in the
computations for Example \ref{ex:Pareto-distributions} as a
trivial consequence), while the $\tilde{\lambda}_2$ appears in the
computations for the Gumbel distribution in both Examples
\ref{ex:scale-parameter} and \ref{ex:location-parameter}; however,
interestingly, Lemma \ref{lem:particular-values}(iii)
states the close relationship between $\tilde{\lambda}_1$ and 
$\tilde{\lambda}_2$. The proof of Lemma \ref{lem:particular-values}
is simple, and therefore omitted.

\begin{lemma}[The values $\tilde{\lambda}_1$ and $\tilde{\lambda}_2$]\label{lem:particular-values}
The following statements hold.\\
(i) Let $f_1$ be the function defined by
$f_1(\lambda):=\frac{(1-\lambda)(\log(1-\lambda))^2}{\lambda}$.
Then $\sup_{\lambda\in(0,1)}f_1(\lambda)=f_1(\tilde{\lambda}_1)$,
where $\tilde{\lambda}_1\simeq 0.7968$ is the unique value
$(1/2,1)$ such that
$-2\tilde{\lambda}_1-\log(1-\tilde{\lambda}_1)=0$.\\
(ii) Let $f_2$ be the function defined by
$f_2(\lambda):=\frac{\lambda(\log\lambda)^2}{1-\lambda}$. Then
$\sup_{\lambda\in(0,1)}f_2(\lambda)=f_2(\tilde{\lambda}_2)$, where
$\tilde{\lambda}_2\simeq 0.2032$ is the unique value $(0,1/2)$
such that $\log\tilde{\lambda}_2+2-2\tilde{\lambda}_2=0$.\\
(iii) We have $\tilde{\lambda}_1+\tilde{\lambda}_2=1$.
\end{lemma}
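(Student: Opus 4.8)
The plan is to treat part (i) by a direct calculus argument, and then to obtain parts (ii) and (iii) almost for free from the symmetry relation $f_2(\lambda)=f_1(1-\lambda)$.

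For part (i), first I would differentiate $f_1$. Writing $L:=\log(1-\lambda)$ (so that $L<0$ on $(0,1)$) and applying the quotient and product rules, I expect the numerator of $f_1'$ to factor cleanly; the computation gives $f_1'(\lambda)=\frac{-L\,(L+2\lambda)}{\lambda^2}$. Since $-L>0$ throughout $(0,1)$, the sign of $f_1'(\lambda)$ coincides with the sign of $\psi(\lambda):=\log(1-\lambda)+2\lambda$. Thus the critical points of $f_1$ are exactly the zeros of $\psi$, and the defining equation $-2\lambda-\log(1-\lambda)=0$ is simply $\psi(\lambda)=0$.

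Next I would analyze $\psi$ on $(0,1)$. One has $\psi(0)=0$ and $\psi'(\lambda)=2-\frac{1}{1-\lambda}$, which is positive on $(0,1/2)$ and negative on $(1/2,1)$; hence $\psi$ increases from $0$ up to $\psi(1/2)=1-\log 2>0$ and then decreases to $-\infty$ as $\lambda\to 1^-$. Consequently $\psi>0$ on $(0,\tilde{\lambda}_1)$ and $\psi<0$ on $(\tilde{\lambda}_1,1)$, where $\tilde{\lambda}_1$ is the unique zero of $\psi$ in $(1/2,1)$. Translating back through the sign of $f_1'$, this shows that $f_1$ is increasing on $(0,\tilde{\lambda}_1)$ and decreasing on $(\tilde{\lambda}_1,1)$, so $\tilde{\lambda}_1$ is the unique maximizer; a numerical solution of $\psi=0$ gives $\tilde{\lambda}_1\simeq 0.7968$.

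For parts (ii) and (iii), the key observation is that substituting $\mu=1-\lambda$ into $f_1$ yields $f_1(1-\lambda)=\frac{\lambda(\log\lambda)^2}{1-\lambda}=f_2(\lambda)$. Hence $\sup_{\lambda\in(0,1)}f_2=\sup_{\mu\in(0,1)}f_1=f_1(\tilde{\lambda}_1)$, attained at $\tilde{\lambda}_2:=1-\tilde{\lambda}_1\in(0,1/2)$, which gives (iii) immediately. Finally, substituting $\tilde{\lambda}_1=1-\tilde{\lambda}_2$ into $-2\tilde{\lambda}_1-\log(1-\tilde{\lambda}_1)=0$ rearranges to $\log\tilde{\lambda}_2+2-2\tilde{\lambda}_2=0$, the claimed characterization of $\tilde{\lambda}_2$. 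The only genuinely computational step is the factorization of $f_1'$; everything else follows from the monotonicity of $\psi$ and the substitution, so I do not anticipate any serious obstacle.
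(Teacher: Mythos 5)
Your proof is correct. For part (i) you follow essentially the same calculus route as the paper: your derivative formula $f_1'(\lambda)=\frac{-L(L+2\lambda)}{\lambda^2}$ with $L=\log(1-\lambda)$ is exactly the paper's $\frac{\log(1-\lambda)[-2\lambda-\log(1-\lambda)]}{\lambda^2}$. You actually go further than the paper in one useful respect: the paper simply asserts that $-2\lambda-\log(1-\lambda)$ changes sign exactly once, at $\tilde{\lambda}_1\in(1/2,1)$, whereas you prove this by noting $\psi(0)=0$, that $\psi'(\lambda)=2-\frac{1}{1-\lambda}$ changes sign at $\lambda=1/2$, that $\psi(1/2)=1-\log 2>0$, and that $\psi(\lambda)\to-\infty$ as $\lambda\to 1^-$; this pins down both the existence and the uniqueness of the zero in $(1/2,1)$ and the claimed monotonicity pattern of $f_1$. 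Where you genuinely diverge is in parts (ii) and (iii): the paper repeats the entire differentiation argument for $f_2$ and then verifies $\tilde{\lambda}_1+\tilde{\lambda}_2=1$ by comparing the two defining equations, while you observe the symmetry $f_2(\lambda)=f_1(1-\lambda)$ and obtain (ii) and (iii) simultaneously. Your route is shorter and explains \emph{why} the two optimal levels are reflections of each other (the reflection $\lambda\mapsto 1-\lambda$ intertwines the two functions), at the modest cost of having to translate the defining equation for $\tilde{\lambda}_1$ into the stated equation $\log\tilde{\lambda}_2+2-2\tilde{\lambda}_2=0$, a one-line substitution you carry out correctly.
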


\subsection{Example \ref{ex:scale-parameter}}\label{sub:example1}
In this section we consider the particular example of the Weibull
distribution with $(\alpha,\omega)=(0,\infty)$, and the particular
examples with $(\alpha,\omega)=(-\infty,\infty)$. In each part we
analyze the MQ estimators, and we conclude with the MM estimators.

\paragraph{Analysis of MQ estimators for Weibull distribution.}
Here we consider $G$ in \eqref{eq:df-Weibull}. By
\eqref{eq:second-derivative-scale-parameter} we have
$$I_{\lambda,\theta_0}^{\prime\prime}(\theta_0)=\frac{\rho^2(1-\lambda)(\log(1-\lambda))^2}{\lambda\theta_0^2}.$$
Then we have a unique optimal value
$\lambda_{\mathrm{max}}=\tilde{\lambda}_1$ (for every $\rho$ and
$\theta_0$), where $\tilde{\lambda}_1$ is the value in Lemma
\ref{lem:particular-values}(i); in fact, if we consider the
function $f_1$ in that lemma, we have
$I_{\lambda,\theta_0}^{\prime\prime}(\theta_0)=\frac{\rho^2f_1(\lambda)}{\theta_0^2}$.

\paragraph{MM versus MQ estimators for Weibull distribution.}
We start with the analysis of MM estimators. We have
$\mu(\theta):=\theta\Gamma(1+1/\rho)$, and therefore
$\mu^{-1}(m):=\frac{m}{\Gamma(1+1/\rho)}$, where $\Gamma$ is the usual Gamma function. Thus, by Proposition
\ref{prop:LD-for-MM-estimators}(i), $\{\mu^{-1}(\bar{X}_n):n\geq
1\}$ satisfies the LDP with rate function $J_{\theta_0}$ with
$c_1=\frac{1}{\Gamma(1+1/\rho)}$ and $c_0=0$. In what follows we
consider the light-tailed case $\rho\geq 1$ and the heavy-tailed
case $\rho\in(0,1)$; some more details on the exponential
distribution case $\rho=1$ are given in Remark \ref{rem:rho=1}.\\
\underline{Light-tailed case} (namely $\rho\geq 1$). The rate
functions $J_{\theta_0}$ and $\Lambda_{\theta_0}^*$ are good and
we can refer to the comparison between
$$J_{\theta_0}^{\prime\prime}(\theta_0)=\frac{\Gamma^2(1+1/\rho)}{\theta_0^2\left[\Gamma(1+2/\rho)-\Gamma^2(1+1/\rho)\right]}$$
(this value is a consequence of Proposition
\ref{prop:second-derivative-J} noting that
$\sigma^2(\theta)=\theta^2\left[\Gamma(1+2/\rho)-\Gamma^2(1+1/\rho)\right]$)
and, for the optimal value
$\lambda_{\mathrm{max}}=\tilde{\lambda}_1$ in Lemma
\ref{lem:particular-values}(i),
$$I_{\tilde{\lambda}_1,\theta_0}^{\prime\prime}(\theta_0)=\frac{(1-\tilde{\lambda}_1)(\log(1-\tilde{\lambda}_1))^2}{\tilde{\lambda}_1\theta_0^2}
=\frac{4\tilde{\lambda}_1(1-\tilde{\lambda}_1)}{\theta_0^2}.$$ 
We remark that $J_{\theta_0}^{\prime\prime}(\theta_0)$ is an
increasing function of $\rho\in(0,\infty)$; in fact, if we set
$$a(\rho):=\Gamma^\prime(1+2/\rho)\Gamma(1+1/\rho)-\Gamma(1+2/\rho)\Gamma^\prime(1+1/\rho),$$
we have $a(\rho)>0$ (noting that
$\frac{\Gamma^\prime(1+2/\rho)}{\Gamma(1+2/\rho)}>\frac{\Gamma^\prime(1+1/\rho)}{\Gamma(1+1/\rho)}$
because the digamma function
$x\mapsto\frac{\Gamma^\prime(x)}{\Gamma(x)}$ is increasing on
$(0,\infty)$) and therefore
$$\frac{d}{d\rho}\left(\frac{\Gamma^2(1+1/\rho)}{\theta_0^2\left[\Gamma(1+2/\rho)-\Gamma^2(1+1/\rho)\right]}\right)
=\frac{2\Gamma(1+1/\rho)a(\rho)}
{\rho^2\theta_0^2\left(\frac{\Gamma(1+2/\rho)}{\Gamma^2(1+1/\rho)}-1\right)^2\Gamma^4(1+1/\rho)}>0.$$
Thus, for all $\rho\geq 1$, MM estimators converge faster than
every MQ estimators because
$$J_{\theta_0}^{\prime\prime}(\theta_0)\geq\frac{1}{\theta_0^2}>
\frac{4\tilde{\lambda}_1(1-\tilde{\lambda}_1)}{\theta_0^2}=I_{\tilde{\lambda}_1,\theta_0}^{\prime\prime}(\theta_0)$$
noting that $\inf_{\rho\geq
1}J_{\theta_0}^{\prime\prime}(\theta_0)=\left.J_{\theta_0}^{\prime\prime}(\theta_0)\right|_{\rho=1}=\frac{1}{\theta_0^2}$
and $4\tilde{\lambda}_1(1-\tilde{\lambda}_1)\simeq 0.6476$.\\
\underline{Heavy-tailed case} (namely $\rho\in(0,1)$). The rate
functions $J_{\theta_0}$ and $\Lambda_{\theta_0}^*$ are not good
(we recall Remark \ref{rem:goodness-hypothesis} presented above).
We can say that $J_{\theta_0}(\theta)=0$ for $\theta\geq\theta_0$;
thus $I_{\lambda,\theta_0}(\theta)>J_{\theta_0}(\theta)$ for
$\theta>\theta_0$ (for all $\lambda\in(0,1)$). Then we have to
compare $I_{\tilde{\lambda}_1,\theta_0}(\theta)$ and
$J_{\theta_0}(\theta)$ in a left neighborhood of $\theta_0$,
namely when $\theta\in(\theta_0-\delta,\theta_0)$ for $\delta>0$
small enough. Therefore it suffices to compare
$I_{\tilde{\lambda}_1,\theta_0}^{\prime\prime}(\theta_0)$ and the
left second derivative
$\left.\frac{d^2}{d\theta^2}J_{\theta_0}(\theta-)\right|_{\theta=\theta_0}$
which coincides with $J_{\theta_0}^{\prime\prime}(\theta_0)$
presented above for the light-tailed case. We already explained
that
$\left.\frac{d^2}{d\theta^2}J_{\theta_0}(\theta-)\right|_{\theta=\theta_0}$
is an increasing function of $\rho\in(0,\infty)$; moreover
$$\left.\frac{d^2}{d\theta^2}J_{\theta_0}(\theta-)\right|_{\theta=\theta_0}
=\frac{1}{\theta_0^2\left[\frac{\Gamma(1+2/\rho)}{\Gamma^2(1+1/\rho)}-1\right]}\to
0\ \mbox{as}\ \rho\to 0$$ by taking into account the asymptotic
behavior of Gamma function. In conclusion there exists
$\rho_0\simeq 0.81068$ (computed numerically) such that:
\begin{enumerate}
\item
$I_{\tilde{\lambda}_1,\theta_0}^{\prime\prime}(\theta_0)>\left.\frac{d^2}{d\theta^2}J_{\theta_0}(\theta-)\right|_{\theta=\theta_0}$
for $\rho\in (0,\rho_0)$;
\item
$I_{\tilde{\lambda}_1,\theta_0}^{\prime\prime}(\theta_0)<\left.\frac{d^2}{d\theta^2}J_{\theta_0}(\theta-)\right|_{\theta=\theta_0}$
for $\rho\in (\rho_0,1)$.
\end{enumerate}
Thus, by taking into account Remark
\ref{rem:comparison-between-rfs}, MQ estimators (with
$\lambda=\tilde{\lambda}_1$) converge faster than MM estimators in
the first case while, in the second case, the convergence of MQ
and MM estimators cannot be compared because we cannot find
$\delta>0$ such that
$I_{\tilde{\lambda}_1,\theta_0}(\theta)>J_{\theta_0}(\theta)$ or
$I_{\tilde{\lambda}_1,\theta_0}(\theta_)<J_{\theta_0}(\theta)$ for
$0<|\theta-\theta_0|<\delta$.

\begin{remark}[The case $\rho=1$, namely the exponential distribution]\label{rem:rho=1}
If $\rho=1$ the rate functions $J_{\theta_0}$ and
$\Lambda_{\theta_0}^*$ coincide (in fact $\Gamma(1+1/\rho)=1$) and
an explicit expression of $\Lambda_{\theta_0}^*$ is available,
namely
$$J_{\theta_0}(\theta)=\Lambda_{\theta_0}^*(\theta)=\left\{\begin{array}{ll}
\frac{\theta}{\theta_0}-1-\log\left(\frac{\theta}{\theta_0}\right)&\ \mbox{for}\ \theta\in(0,\infty)\\
\infty&\ \mbox{otherwise};
\end{array}\right.$$
then we can directly compute
$J_{\theta_0}^{\prime\prime}(\theta_0)=\frac{1}{\theta_0^2}$,
which meets the above expression. In this case the MM estimators
coincide with the ML estimators, and we already expected that they
converge faster than the MQ estimators.
\end{remark}

\paragraph{Analysis of MQ estimators for particular examples with $(\alpha,\omega)=(-\infty,\infty)$.}
Here we present the results concerning the specific examples
listed above. For all cases except the one with the Gumbel
distribution we choose $\eta$ in order to have
$G(0)\in\{0.25,0.5,0.75\}$, and we have some common features:
$G(0)=1/2$ for $\eta=0$ (actually the symmetry property holds); we
obtain symmetric values with respect $\lambda=1/2$ such that the
more the tails of the distributions are light, the more the
numerical values of $\lambda_{\mathrm{max}}$ are distant from
$\lambda=1/2$. The case with the Gumbel distribution behaves
differently because the symmetry property fails for each fixed
value of $\eta$. In all cases we can only give numerical values.\\
\underline{Normal distribution} (namely $G$ in
\eqref{eq:df-Normal-scale-parameter}). We have $G(0)=\Phi(-\eta)$
and, by \eqref{eq:second-derivative-scale-parameter},
$$I_{\lambda,\theta_0}^{\prime\prime}(\theta_0)=\frac{(\varphi(\Phi^{-1}(\lambda))\{\eta+\Phi^{-1}(\lambda)\})^2}{\lambda(1-\lambda)\theta_0^2},$$
where $\varphi$ is the standard Normal probability density function.
Moreover
$$\left.\begin{array}{lc}
&\ \mbox{numerical values for}\ \lambda_{\mathrm{max}}\\
\eta=0&\ 0.06\ (\mbox{and}\ 0.94\ \mbox{by symmetry})\\
\eta=-\Phi^{-1}(1/4)&\ 0.90\\
\eta=-\Phi^{-1}(3/4)&\ 1-0.90=0.10\ (\mbox{by symmetry})
\end{array}\right.$$
\underline{Cauchy distribution} (namely $G$ in
\eqref{eq:df-Cauchy-scale-parameter}). We have
$G(0)=\frac{1}{\pi}\left(\arctan(-\eta)+\frac{\pi}{2}\right)$ and,
by \eqref{eq:second-derivative-scale-parameter},
$$I_{\lambda,\theta_0}^{\prime\prime}(\theta_0)=\frac{\left\{\eta+\tan\left(\left(\lambda-\frac{1}{2}\right)\pi\right)\right\}^2}
{\pi^2\left\{1+\tan^2\left(\left(\lambda-\frac{1}{2}\right)\pi\right)\right\}^2\lambda(1-\lambda)\theta_0^2}.$$
Moreover
$$\left.\begin{array}{lc}
&\ \mbox{numerical values for}\ \lambda_{\mathrm{max}}\\
\eta=0&\ 0.21\ (\mbox{and}\ 0.79\ \mbox{by symmetry})\\
\eta=1&\ 0.65\\
\eta=-1&\ 1-0.65=0.35\ (\mbox{by symmetry})
\end{array}\right.$$
\underline{Logistic distribution} (namely $G$ in
\eqref{eq:df-logistic-scale-parameter}). We have
$G(0)=\frac{1}{1+e^\eta}$ and, by
\eqref{eq:second-derivative-scale-parameter},
$$I_{\lambda,\theta_0}^{\prime\prime}(\theta_0)=\frac{\lambda(1-\lambda)\left(\eta-\log(\frac{1}{\lambda}-1)\right)^2}{\theta_0^2}.$$
Moreover
$$\left.\begin{array}{lc}
&\ \mbox{numerical values for}\ \lambda_{\mathrm{max}}\\
\eta=0&\ 0.08\ (\mbox{and}\ 0.92\ \mbox{by symmetry})\\
\eta=\log 3&\ 0.85\\
\eta=-\log 3&\ 1-0.85=0.15\ (\mbox{by symmetry})
\end{array}\right.$$
\underline{Gumbel distribution} (namely $G$ in
\eqref{eq:df-Gumbel-scale-parameter}). We have
$G(0)=\exp(-e^\eta)$ and, by
\eqref{eq:second-derivative-scale-parameter},
$$I_{\lambda,\theta_0}^{\prime\prime}(\theta_0)=\frac{\lambda(\log\lambda)^2(\eta-\log(-\log\lambda))^2}{(1-\lambda)\theta_0^2}.$$
Some numerical inspections reveal that in general, for each fixed
value of $\eta$, we can find an optimal value
$\lambda_{\mathrm{max}}=\lambda_{\mathrm{max}}(\eta)$. Then, if we
consider the value $\tilde{\lambda}_2$ and the function $f_2$ in
Lemma \ref{lem:particular-values}(ii), we can say that
$$\lambda_{\mathrm{max}}=\lambda_{\mathrm{max}}(\eta)\to\tilde{\lambda}_2\ \mbox{as}\ |\eta|\to\infty$$
because $I_{\lambda,\theta_0}^{\prime\prime}(\theta_0)$ behaves
like $\frac{f_2(\lambda)\eta^2}{\theta_0^2}$ when $|\eta|$ is
large.

\paragraph{MM versus MQ estimators for particular examples with $(\alpha,\omega)=(-\infty,\infty)$.}
The MM estimators are well-defined only for the case with Gumbel
distribution and, in the spirit of Remark
\ref{rem:comparison-between-rfs}, we can compare
$J_{\theta_0}^{\prime\prime}(\theta_0)$ and
$I_{\lambda_{\mathrm{max}},\theta_0}^{\prime\prime}(\theta_0)$.
However, for Normal and logistic distributions, it is possible to
consider suitable GMM estimators $\{\tilde{\Theta}_n:n\geq 1\}$ by
matching empirical and theoretical variances; so we present the
rate function $\tilde{J}_{\theta_0}$ which governs the LDP of
$\{\tilde{\Theta}_n:n\geq 1\}$ and, at least for the case of
Normal distribution, we can give an expression of
$\tilde{J}_{\theta_0}^{\prime\prime}(\theta_0)$ and we can compare
the convergence of MQ and GMM estimators.\\
\underline{Gumbel distribution}. We have
$\mu(\theta):=\eta+\theta\gamma_*$, where $\gamma_*$ is the
Euler's constant, and therefore
$\mu^{-1}(m):=\frac{m-\eta}{\gamma_*}$. Thus, by Proposition
\ref{prop:LD-for-MM-estimators}(i), $\{\mu^{-1}(\bar{X}_n):n\geq
1\}$ satisfies the LDP with rate function $J_{\theta_0}$ with
$c_1=\frac{1}{\gamma_*}$ and $c_0=\frac{-\eta}{\gamma_*}$. The
rate functions $J_{\theta_0}$ and $\Lambda_{\theta_0}^*$ are good
(we take into account Remark \ref{rem:goodness-hypothesis}) and we
can refer to the comparison between
$I_{\lambda_{\mathrm{max}},\theta_0}^{\prime\prime}(\theta_0)$ and
$J_{\theta_0}^{\prime\prime}(\theta_0)$. We remark that, by
Proposition \ref{prop:second-derivative-J}, we have
$J_{\theta_0}^{\prime\prime}(\theta_0)=\frac{6\gamma_*^2}{\theta_0^2\pi^2}$
for each fixed value of $\eta$ (in fact we have
$\sigma^2(\theta)=\frac{\theta^2\pi^2}{6}$). Then, for all
$\eta\in\mathbb{R}$, we have
$I_{\lambda_{\mathrm{max}},\theta_0}^{\prime\prime}(\theta_0)>J_{\theta_0}^{\prime\prime}(\theta_0)$
noting that, for $\tilde{\lambda}_1$ and $\tilde{\lambda}_2$ as in
Lemma \ref{lem:particular-values}, we have
$\max\{I_{\tilde{\lambda}_1,\theta_0}^{\prime\prime}(\theta_0),I_{\tilde{\lambda}_2,\theta_0}^{\prime\prime}(\theta_0)\}>J_{\theta_0}^{\prime\prime}(\theta_0)$
(see Figure \ref{fig1}).
\begin{figure}[ht]
\begin{center}
\includegraphics[angle=0,width=0.5\textwidth]{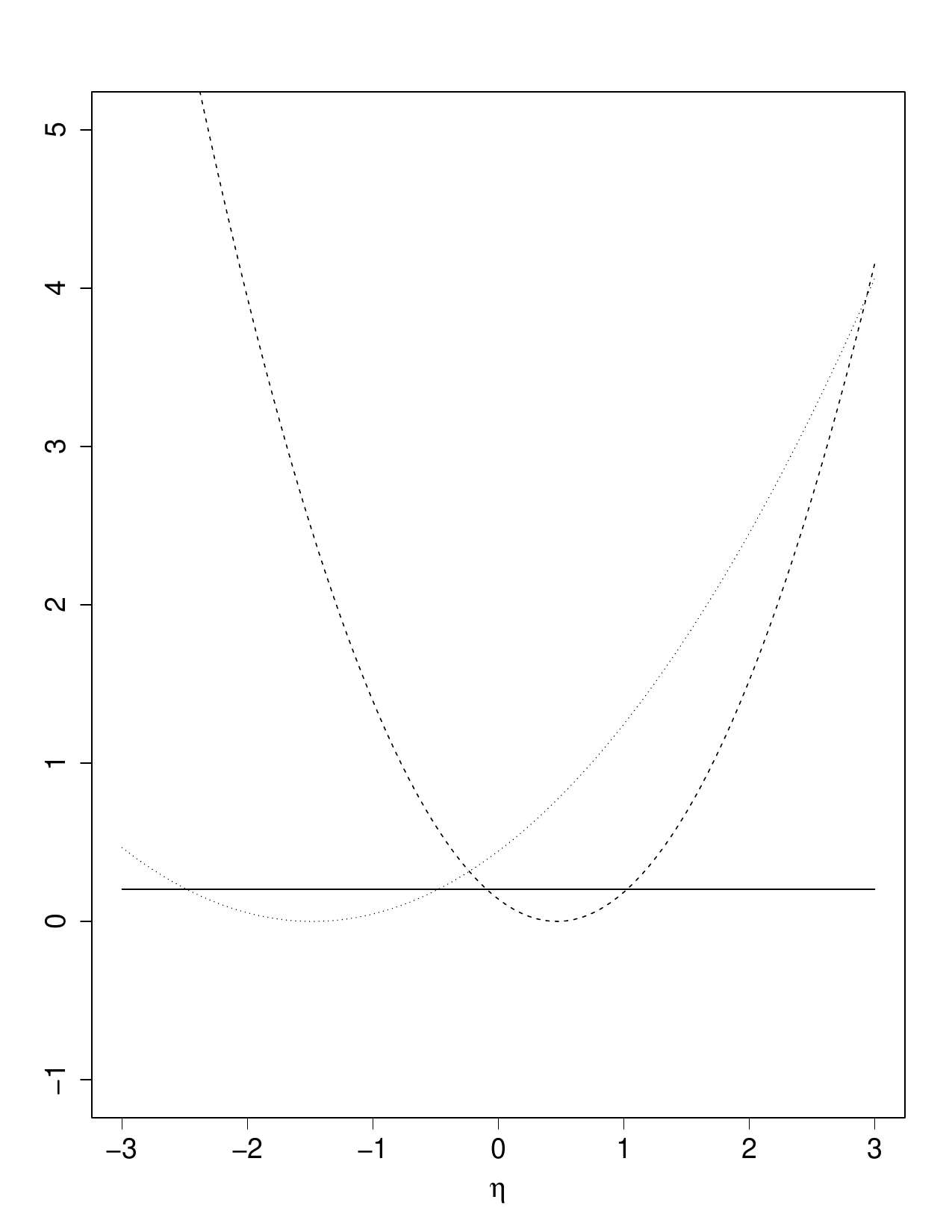}
\caption{The second derivatives
$I_{\tilde{\lambda}_2,\theta_0}^{\prime\prime}(\theta_0)$
(dashed line) and $I_{\tilde{\lambda}_1,\theta_0}^{\prime\prime}(\theta_0)$ 
(dotted line) as functions of $\eta$. The solid line represents the value
of $J_{\theta_0}^{\prime\prime}(\theta_0)$ which does not depend
on $\eta$.}\label{fig1}
\end{center}
\end{figure}\\
%
%
%
\underline{Normal (and logistic) distribution}. The MM estimators
are not well-defined because $\mu(\theta):=\eta$. So it is natural
to match empirical and theoretical variances, i.e.
$$\sigma^2(\theta)=\frac{1}{n}\sum_{i=1}^n(x_i-\eta)^2,\ \mbox{where}\ \sigma^2(\theta)=c\theta^2\ \mbox{and}\ c=\left\{\begin{array}{ll}
1&\ \mbox{for the Normal distribution}\\
\frac{\pi^2}{3}&\ \mbox{for the logistic distribution},
\end{array}\right.$$
and we obtain the GMM estimators $\{\tilde{\Theta}_n:n\geq 1\}$
defined by
$$\tilde{\Theta}_n:=\left(\frac{1}{cn}\sum_{i=1}^n(x_i-\eta)^2\right)^{1/2}.$$
Then, by adapting the proof of Proposition
\ref{prop:LD-for-MM-estimators}, we can consider the function
$$\tilde{\Lambda}_{\theta_0}^*(y):=\sup_{\gamma\in\mathbb{R}}\left\{\gamma
y-\tilde{\Lambda}_{\theta_0}(\gamma)\right\},\ \mbox{where}\
\tilde{\Lambda}_{\theta_0}(\gamma):=\log\int_{\alpha_\theta}^{\omega_\theta}e^{\gamma
(x-\eta)^2}dF_{\theta_0}(x),$$ and we can say
$\{\tilde{\Theta}_n:n\geq 1\}$ satisfies the LDP with good rate
function $\tilde{J}_{\theta_0}$ defined by
$$\tilde{J}_{\theta_0}(\theta):=\inf\{\tilde{\Lambda}_{\theta_0}^*(y):(y/c)^{1/2}=\theta\}.$$
From now on we restrict the attention to the case with Normal
distribution because we can give explicit formulas. We have
$$\tilde{\Lambda}_{\theta_0}(\gamma)=\left\{\begin{array}{ll}
\frac{1}{2}\log\left(\frac{\theta_0^2/2}{\theta_0^2/2-\gamma}\right)&\ \mbox{if}\ \gamma<\frac{\theta_0^2}{2}\\
\infty&\ \mbox{if}\ \gamma\geq\frac{\theta_0^2}{2},
\end{array}\right.\ \tilde{\Lambda}_{\theta_0}^*(y)=\left\{\begin{array}{ll}
\frac{1}{2}\left[\frac{y}{\theta_0^2}-1-\log\left(\frac{y}{\theta_0^2}\right)\right]&\ \mbox{if}\ y>0\\
\infty&\ \mbox{if}\ y\leq 0,
\end{array}\right.$$
and
$$\tilde{J}_{\theta_0}(\theta)=\left\{\begin{array}{ll}
\frac{1}{2}\left[\frac{\theta^2}{\theta_0^2}-1-\log\left(\frac{\theta^2}{\theta_0^2}\right)\right]&\ \mbox{if}\ \theta>0\\
\infty&\ \mbox{if}\ \theta\leq 0;
\end{array}\right.$$
thus, after some computations, we get
$\tilde{J}_{\theta_0}^{\prime\prime}(\theta_0)=\frac{2}{\theta_0^2}$
for all value of $\eta\in\mathbb{R}$. We conclude with the
comparison between MQ and GMM estimators. Some numerical
inspections reveal that in general, for each fixed value of
$\eta$, we can find an optimal value
$\lambda_{\mathrm{max}}=\lambda_{\mathrm{max}}(\eta)$ (their
numerical values for $\eta=0$, $\eta=\Phi^{-1}(1/4)$ and
$\eta=\Phi^{-1}(3/4)$ were presented above); moreover
$I_{\lambda_{\mathrm{max}}(\eta),\theta_0}^{\prime\prime}(\theta_0)>\tilde{J}_{\theta_0}^{\prime\prime}(\theta_0)$
for $|\eta|$ large enough because, for each fixed
$\lambda\in(0,1)$,
$$I_{\lambda,\theta_0}^{\prime\prime}(\theta_0)\to\infty\ \mbox{as}\ |\eta|\to\infty.$$
On the other hand we cannot say that
$I_{\lambda_{\mathrm{max}}(\eta),\theta_0}^{\prime\prime}(\theta_0)>\tilde{J}_{\theta_0}^{\prime\prime}(\theta_0)$
for all $\eta\in\mathbb{R}$; in fact, for $\eta=0$, we have
$$I_{\lambda_{\mathrm{max}}(0),\theta_0}^{\prime\prime}(\theta_0)\simeq\frac{0.6085}{\theta_0^2}<\frac{2}{\theta_0^2}=\tilde{J}_{\theta_0}^{\prime\prime}(\theta_0)$$
(where $\lambda_{\mathrm{max}}(0)\simeq 0.06$ or
$\lambda_{\mathrm{max}}(0)\simeq 0.94$). For completeness,
following the same lines of the particular case with Gumbel
distribution, we remark that
$$\lambda_{\mathrm{max}}=\lambda_{\mathrm{max}}(\eta)\to 1/2\ \mbox{as}\ |\eta|\to\infty$$
because, if we consider the function
$f(\lambda):=\frac{(\varphi(\Phi^{-1}(\lambda)))^2}{\lambda(1-\lambda)}$,
$I_{\lambda,\theta_0}^{\prime\prime}(\theta_0)$ behaves like
$\frac{f(\lambda)\eta^2}{\theta_0^2}$ when $|\eta|$ is large, and
$\sup_{\lambda\in(0,1)}f(\lambda)=f(1/2)$.

\subsection{Example \ref{ex:location-parameter}}\label{sub:example2}
We start with the analysis of MQ estimators. We conclude with the
MM estimators, and their comparison with the MQ estimators.

\paragraph{Analysis of MQ estimators.}
Here we present the results concerning the specific examples
listed above. In all cases, except the one with Gumbel
distribution, we can conclude that $\lambda=1/2$ is optimal;
however we can find counterexamples (see Appendix \ref{appendix}).
A further common feature (for all cases except the one with Gumbel
distribution) is that
$\left.\frac{d}{d\lambda}I_{\lambda,\theta_0}^{\prime\prime}(\theta_0)\right|_{\lambda=1/2}=0$
(and obviously this does not guarantee that $\lambda=1/2$ is an
optimal; this will be explained in Appendix \ref{appendix}); in
fact, after some computations, we can verify that
$\left.\frac{d}{d\lambda}I_{\lambda,\theta_0}^{\prime\prime}(\theta_0)\right|_{\lambda=1/2}=0$
noting that $G^{-1}(1/2)=0$ (because the distribution function $G$
has the symmetry property) and $G^{\prime\prime}(0)=0$ (because
the probability density function $G^\prime(x)$ has a maximum at
$x=0$).\\
\underline{Normal distribution} (namely $G$ in
\eqref{eq:df-Normal-location-parameter}). By
\eqref{eq:second-derivative-location-parameter} we have
$$I_{\lambda,\theta_0}^{\prime\prime}(\theta_0)=\frac{\varphi^2(\Phi^{-1}(\lambda))}{s^2\lambda(1-\lambda)}.$$
One can check numerically that we have a unique optimal
$\lambda_{\mathrm{max}}$ (for every $s$), namely
$\lambda_{\mathrm{max}}=0.5$.\\
\underline{Cauchy distribution} (namely $G$ in
\eqref{eq:df-Cauchy-location-parameter}). By
\eqref{eq:second-derivative-location-parameter} we have
$$I_{\lambda,\theta_0}^{\prime\prime}(\theta_0)=\frac{1}{s^2\pi^2\left\{1+\tan^2\left(\left(\lambda-\frac{1}{2}\right)\pi\right)\right\}^2\lambda(1-\lambda)}.$$
One can check numerically that we have a unique optimal
$\lambda_{\mathrm{max}}$ (for every $s$), namely
$\lambda_{\mathrm{max}}=0.5$.\\
\underline{Logistic distribution} (namely $G$ in
\eqref{eq:df-logistic-location-parameter}). By
\eqref{eq:second-derivative-location-parameter} we have
$$I_{\lambda,\theta_0}^{\prime\prime}(\theta_0)=\frac{\lambda(1-\lambda)}{s^2}$$
One can immediately check (we have a polynomial with degree 2)
that we have a unique optimal $\lambda_{\mathrm{max}}$ (for every
$s$), namely $\lambda_{\mathrm{max}}=0.5$.\\
\underline{Gumbel distribution} (namely $G$ in
\eqref{eq:df-Gumbel-location-parameter}). By
\eqref{eq:second-derivative-location-parameter} we have
$$I_{\lambda,\theta_0}^{\prime\prime}(\theta_0)=\frac{\lambda(\log\lambda)^2}{s^2(1-\lambda)}$$
Then we have a unique optimal value
$\lambda_{\mathrm{max}}=\tilde{\lambda}_2$ (for every $s$), where
$\tilde{\lambda}_2$ is the value in Lemma
\ref{lem:particular-values}(ii); in fact, if we consider the
function $f_2$ in that lemma, we have
$I_{\lambda,\theta_0}^{\prime\prime}(\theta_0)=\frac{f_2(\lambda)}{s^2}$.

\paragraph{MM versus MQ estimators.}
The MM estimators are well-defined in all cases except the one
with Cauchy distribution. Moreover, by taking into account Remark
\ref{rem:goodness-hypothesis}, we can always refer to the
comparison between $J_{\theta_0}^{\prime\prime}(\theta_0)$ and
$I_{\lambda_{\mathrm{max}},\theta_0}^{\prime\prime}(\theta_0)$.\\
\underline{Normal distribution}. In this case
$\mu(\theta):=\theta$. Thus, by Proposition
\ref{prop:LD-for-MM-estimators}(i), $\{\mu^{-1}(\bar{X}_n):n\geq
1\}=\{\bar{X}_n:n\geq 1\}$ satisfies the LDP with rate function
$J_{\theta_0}$ defined by
$$J_{\theta_0}(\theta)=\frac{(\theta-\theta_0)^2}{2s^2}$$
(in fact $J_{\theta_0}$ coincides with $\Lambda_{\theta_0}^*$ because 
$c_1=1$ and $c_0=0$). Then, since $\lambda_{\mathrm{max}}=1/2$ we 
have to compare $J_{\theta_0}^{\prime\prime}(\theta_0)=\frac{1}{s^2}$ 
(which meets the expression provided by Proposition
\ref{prop:second-derivative-J} noting that $\sigma^2(\theta)=s^2$)
and $I_{1/2,\theta_0}^{\prime\prime}(\theta_0)=\frac{2}{\pi s^2}$
and, obviously, we have
$J_{\theta_0}^{\prime\prime}(\theta_0)>I_{1/2,\theta_0}^{\prime\prime}(\theta_0)$
(for every $s$). Thus MM estimators converge faster than every MQ
estimators; in some sense we already expected this noting that the
MM estimators coincide with the ML estimators.\\
\underline{Logistic distribution}. In this case
$\mu(\theta):=\theta$. Thus, by Proposition
\ref{prop:LD-for-MM-estimators}(i), $\{\mu^{-1}(\bar{X}_n):n\geq
1\}=\{\bar{X}_n:n\geq 1\}$ satisfies the LDP with rate function
$J_{\theta_0}$, which coincides with $\Lambda_{\theta_0}^*$ (we
have again $c_1=1$ and $c_0=0$); in this case we cannot provide an
explicit expression of $\Lambda_{\theta_0}^*$. Then, since
$\lambda_{\mathrm{max}}=1/2$ we have to compare
$J_{\theta_0}^{\prime\prime}(\theta_0)=\frac{3}{\pi^2s^2}$ (this
is a consequence of Proposition \ref{prop:second-derivative-J}
noting that $\sigma^2(\theta)=\frac{\pi^2s^2}{3}$) and
$I_{1/2,\theta_0}^{\prime\prime}(\theta_0)=\frac{1}{4s^2}$ and,
obviously, we have
$J_{\theta_0}^{\prime\prime}(\theta_0)>I_{1/2,\theta_0}^{\prime\prime}(\theta_0)$
(for every $s$). Thus MM estimators converge faster than every MQ
estimators but, differently from what happens for the case with
Normal distribution, they do not coincide with ML estimators.\\
\underline{Gumbel distribution}. In this case
$\mu(\theta):=\theta+s\gamma_*$, where $\gamma_*$ is the Euler's
constant. Thus, by Proposition \ref{prop:LD-for-MM-estimators}(i),
$\{\mu^{-1}(\bar{X}_n):n\geq 1\}=\{\bar{X}_n:n\geq 1\}$ satisfies
the LDP with rate function $J_{\theta_0}$ (with $c_1=1$ and
$c_0=-s\gamma_*$); in this case we cannot provide an explicit
expression of $\Lambda_{\theta_0}^*$. The rate functions
$J_{\theta_0}$ and $\Lambda_{\theta_0}^*$ are good and we can
refer to the comparison between
$$J_{\theta_0}^{\prime\prime}(\theta_0)=\frac{6}{\pi^2s^2}$$
(this value is a consequence of Proposition
\ref{prop:second-derivative-J} noting that
$\sigma^2(\theta)=\frac{\pi^2s^2}{6}$) and, for the optimal value
$\lambda_{\mathrm{max}}=\tilde{\lambda}_2$ defined in Lemma
\ref{lem:particular-values}(ii),
$$I_{\tilde{\lambda}_2,\theta_0}^{\prime\prime}(\theta_0)=\frac{\tilde{\lambda}_2(\log\tilde{\lambda}_2)^2}{s^2(1-\tilde{\lambda}_2)}
=\frac{4\tilde{\lambda}_2(1-\tilde{\lambda}_2)}{s^2}.$$
We can check numerically that
$I_{\tilde{\lambda}_2,\theta_0}^{\prime\prime}(\theta_0)>J_{\theta_0}^{\prime\prime}(\theta_0)$
(for every $s$); in fact we have
$4\tilde{\lambda}_2(1-\tilde{\lambda}_2)\simeq 0.6476$ (we get a
numerical value obtained for the statistical model with Weibull
distributions because
$4\tilde{\lambda}_2(1-\tilde{\lambda}_2)=4\tilde{\lambda}_1(1-\tilde{\lambda}_1)$
by Lemma \ref{lem:particular-values}(iii)) and
$\frac{6}{\pi^2}\simeq 0.6079$. Thus MQ estimators with the
optimal value $\lambda_{\mathrm{max}}$ converge faster than MM
estimators.

\subsection{Example \ref{ex:skew-parameter}}\label{sub:example3}
Here we analyze the MQ estimators for the specific examples listed
above. In all cases we can only give numerical values; such values
depend on the unknown parameter $\theta_0$, and therefore we do
not discuss the comparison with the MM estimators (as we do for
the other examples). We have the same feature highlighted for
Example \ref{ex:scale-parameter} with
$(\alpha,\omega)=(-\infty,\infty)$, namely the more the tail of
the distributions are light, the more the numerical values of
$\lambda_{\mathrm{max}}$ are distant from $\lambda=1/2$.\\
\underline{Normal distribution} (namely $G$ in
\eqref{eq:df-Normal-location-parameter}). By
\eqref{eq:second-derivative-skew-parameter} we have
$$I_{\lambda,\theta_0}^{\prime\prime}(\theta_0)=\left\{\begin{array}{ll}
\frac{1}{\lambda(1-\lambda)}\left[\varphi(\Phi^{-1}(\frac{\lambda}{1+\theta_0}))\Phi^{-1}(\frac{\lambda}{1+\theta_0})-\frac{\lambda}{1+\theta_0}\right]^2
&\ \mbox{for}\ \lambda\in(0,\frac{1+\theta_0}{2}]\\
\frac{1}{\lambda(1-\lambda)}\left[-\varphi(\Phi^{-1}(\frac{\lambda-\theta_0}{1-\theta_0}))\Phi^{-1}(\frac{\lambda-\theta_0}{1-\theta_0})
+\frac{\lambda-1}{1-\theta_0}\right]^2 &\ \mbox{for}\
\lambda\in(\frac{1+\theta_0}{2},1).
\end{array}\right.$$
Moreover
$$\left.\begin{array}{lc}
&\ \mbox{numerical values for}\ \lambda_{\mathrm{max}}\\
\theta_0=0&\ 0.15\ (\mbox{and}\ 0.85\ \mbox{by symmetry})\\
\theta_0=1/2&\ 0.94\\
\theta_0=-1/2&\ 1-0.94=0.06\ (\mbox{by symmetry})
\end{array}\right.$$
\underline{Cauchy distribution} (namely $G$ in
\eqref{eq:df-Cauchy-location-parameter}). By
\eqref{eq:second-derivative-skew-parameter} we have
$$I_{\lambda,\theta_0}^{\prime\prime}(\theta_0)=\left\{\begin{array}{ll}
\frac{1}{\lambda(1-\lambda)}\left[\frac{\tan((\frac{\lambda}{1+\theta_0}-\frac{1}{2})\pi)}
{\pi\left(1+\tan^2((\frac{\lambda}{1+\theta_0}-\frac{1}{2})\pi)\right)}-\frac{\lambda}{1+\theta_0}\right]^2
&\ \mbox{for}\ \lambda\in(0,\frac{1+\theta_0}{2}]\\
\frac{1}{\lambda(1-\lambda)}\left[-\frac{\tan((\frac{\lambda-\theta_0}{1-\theta_0}-\frac{1}{2})\pi)}
{\pi\left(1+\tan^2((\frac{\lambda-\theta_0}{1-\theta_0}-\frac{1}{2})\pi)\right)}+\frac{\lambda-1}{1-\theta_0}\right]^2
&\ \mbox{for}\ \lambda\in(\frac{1+\theta_0}{2},1).
\end{array}\right.$$
Moreover
$$\left.\begin{array}{lc}
&\ \mbox{numerical values for}\ \lambda_{\mathrm{max}}\\
\theta_0=0&\ 0.39\ (\mbox{and}\ 0.61\ \mbox{by symmetry})\\
\theta_0=1/2&\ 0.84\\
\theta_0=-1/2&\ 1-0.84=0.16\ (\mbox{by symmetry})
\end{array}\right.$$
\underline{Logistic distribution} (namely $G$ in
\eqref{eq:df-logistic-location-parameter}). By
\eqref{eq:second-derivative-skew-parameter} we have
$$I_{\lambda,\theta_0}^{\prime\prime}(\theta_0)=\left\{\begin{array}{ll}
\frac{1}{\lambda(1-\lambda)}\left[-\frac{\lambda}{1+\theta_0}(1-\frac{\lambda}{1+\theta_0})\log(\frac{1+\theta_0}{\lambda}-1)-\frac{\lambda}{1+\theta_0}\right]^2
&\ \mbox{for}\ \lambda\in(0,\frac{1+\theta_0}{2}]\\
\frac{1}{\lambda(1-\lambda)}\left[\frac{\lambda-\theta_0}{1-\theta_0}(1-\frac{\lambda-\theta_0}{1-\theta_0})
\log(\frac{1-\theta_0}{\lambda-\theta_0}-1)+\frac{\lambda-1}{1-\theta_0}\right]^2
&\ \mbox{for}\ \lambda\in(\frac{1+\theta_0}{2},1).
\end{array}\right.$$
Moreover
$$\left.\begin{array}{lc}
&\ \mbox{numerical values for}\ \lambda_{\mathrm{max}}\\
\theta_0=0&\ 0.22\ (\mbox{and}\ 0.78\ \mbox{by symmetry})\\
\theta_0=1/2&\ 0.92\\
\theta_0=-1/2&\ 1-0.92=0.08\ (\mbox{by symmetry})
\end{array}\right.$$

\subsection{Example \ref{ex:Pareto-distributions}}\label{sub:example4}
Here we analyze Example \ref{ex:Pareto-distributions}. For MQ
estimators we have the same rate function presented in Example
\ref{ex:scale-parameter} with $(\alpha,\omega)=(0,\infty)$ when
$G$ is as in \eqref{eq:df-Weibull} and $\rho=1$. Thus we have a
unique optimal $\lambda_{\mathrm{max}}$ which does not depend on
$\theta_0$, namely $\lambda_{\mathrm{max}}=\tilde{\lambda}_1$
where $\tilde{\lambda}_1$ is defined in Lemma
\ref{lem:particular-values}(i).

Now we briefly discuss the MM estimators for Example
\ref{ex:Pareto-distributions}. We recall that $\mu(\theta)$ is
finite only if $\theta\in\tilde{\Theta}:=(0,1)$, where
$\tilde{\Theta}\subset\Theta=(0,\infty)$. So we could consider the
mean function on the restricted parameter space $\tilde{\Theta}$,
i.e.
$$\mu(\theta)=\frac{1/\theta}{1/\theta-1}=\frac{1}{1-\theta}\ \mbox{for}\ \theta\in\tilde{\Theta}.$$
Then, if we consider the restricted parameter space
$\tilde{\Theta}$, the MM estimators
$\left\{\mu^{-1}(\bar{X}_n):n\geq 1\right\}$ are defined by
$\mu^{-1}(\bar{X}_n)=1-\bar{X}_n^{-1}$, and the function
$\mu^{-1}(\cdot)$ is continuous on $(\alpha,\omega)=(1,\infty)$.
Unfortunately we cannot apply Proposition
\ref{prop:LD-for-MM-estimators} because we cannot consider neither
the hypotheses of Proposition \ref{prop:LD-for-MM-estimators}(i)
(obvious) nor the hypotheses of Proposition
\ref{prop:LD-for-MM-estimators}(ii) because Pareto distributions
are heavy-tailed and $\Lambda_{\theta_0}^*$ is not good (see
Remark \ref{rem:goodness-hypothesis}).

\subsection{Example \ref{ex:right-endpoint-parameter}}\label{sub:example5}
Here we analyze Example \ref{ex:right-endpoint-parameter}. As far
as the MQ estimators are concerned, we can say that we cannot find
an optimal $\lambda$ because
$I_{\lambda,\theta_0}^{\prime\prime}(\theta_0)$ is an increasing
function; in fact, by
\eqref{eq:second-derivative-right-endpoint-parameter}, the
derivative of $I_{\lambda,\theta_0}^{\prime\prime}(\theta_0)$ with
respect to $\lambda$ is
$$\frac{d}{d\lambda}I_{\lambda,\theta_0}^{\prime\prime}(\theta_0)=\frac{(G^\prime(\theta_0))^2}{(1-\lambda)^2G^2(\theta_0)}.$$
We can also say that the larger is $\lambda$ the faster is the
convergence of the MQ estimators.

In the remaining part we deal with the MM estimators, and we
discuss their comparison with the MQ estimators. Obviously the
rate functions $J_{\theta_0}$ and $\Lambda_{\theta_0}^*$ are good
(we take into account Remark \ref{rem:goodness-hypothesis}) and we
can refer to the comparison between
$J_{\theta_0}^{\prime\prime}(\theta_0)$ and
$I_{\lambda,\theta_0}^{\prime\prime}(\theta_0)$ (for
$\lambda\in(0,1)$); for completeness we remark that we cannot
obtain an explicit expression of $\Lambda_{\theta_0}^*$ (even for
the simplest case with the uniform distributions, i.e. the case
$G(x)=x$ for all $x\in(0,\infty)$). It is easy to check that, if
we consider $\lambda_0$ defined by
\begin{equation}\label{eq:threshold-value-lambda-righ-endpoint}
\lambda_0:=\frac{(\mu^\prime(\theta_0))^2G^2(\theta_0)}
{(\mu^\prime(\theta_0))^2G^2(\theta_0)+\sigma^2(\theta_0)(G^\prime(\theta_0))^2},
\end{equation}
we have
$$J_{\theta_0}^{\prime\prime}(\theta_0)=\frac{(\mu^\prime(\theta_0))^2}{\sigma^2(\theta_0)}
>\frac{\lambda(G^\prime(\theta_0))^2}{(1-\lambda)G^2(\theta_0)}=I_{\lambda,\theta_0}^{\prime\prime}(\theta_0)\ \mbox{for}\ \lambda<\lambda_0$$
and
$$J_{\theta_0}^{\prime\prime}(\theta_0)=\frac{(\mu^\prime(\theta_0))^2}{\sigma^2(\theta_0)}
<\frac{\lambda(G^\prime(\theta_0))^2}{(1-\lambda)G^2(\theta_0)}=I_{\lambda,\theta_0}^{\prime\prime}(\theta_0)\
\mbox{for}\ \lambda>\lambda_0$$ by Proposition
\ref{prop:second-derivative-J} and
\eqref{eq:second-derivative-right-endpoint-parameter}.

Thus the MQ estimators converge faster than MM estimators if
$\lambda$ is close to 1; this is not surprising because the case
$\lambda=1$ concerns the case of ML estimators $\{X_{n:n}:n\geq
1\}$. For completeness we can say that, if $\{X_n:n\geq 1\}$ are
i.i.d. with distribution function $F_{\theta_0}$ as in Example
\ref{ex:right-endpoint-parameter}, the LDP in Proposition
\ref{prop:Theorem3.2-in-HMP-restricted} with $\lambda=1$ is
governed by a good rate function; thus we can consider a version
of Proposition \ref{prop:LD-for-MQ-estimators} with $\lambda=1$,
and we have the LDP of $\{X_{n:n}:n\geq 1\}$ with good rate
function $I_{1,\theta_0}$ defined by
$$I_{1,\theta_0}(\theta):=\left\{\begin{array}{ll}
\log\frac{1}{h_{\lambda,\theta_0}(\theta)}=\log\frac{G(\theta_0)}{G(\theta)}
&\ \mbox{for}\ \theta\in\Theta\ \mbox{such that}\ \theta\in(0,\theta_0)\\
\infty&\ \mbox{otherwise}.
\end{array}\right.$$

We also remark that in general the threshold value $\lambda_0$ in
\eqref{eq:threshold-value-lambda-righ-endpoint} depends on
$\theta_0$. In fact, for $G(x):=e^x-1$, after some computations we
have $\mu(\theta)=\frac{\theta e^\theta}{e^\theta-1}-1$,
$\sigma^2(\theta)=\frac{(e^\theta-1)^2-\theta^2e^\theta}{(e^\theta-1)^2}$,
and therefore
$$\lambda_0=\frac{e^{2\theta_0}-2e^{\theta_0}(1+\theta_0)+(1+\theta_0)^2}
{2e^{2\theta_0}-e^{\theta_0}(4+2\theta_0+\theta_0^2)+2+2\theta_0+\theta_0^2}.$$
Interestingly we can say that $\lambda_0$ does not depend on
$\theta_0$ if $G(x):=x^y$ for some $y>0$; in fact we have
$\mu(\theta)=\frac{y\theta}{y+1}$,
$\sigma^2(\theta)=\frac{y\theta^2}{(y+2)(y+1)^2}$, and therefore
$$\lambda_0=\frac{(\frac{y}{y+1})^2(\theta_0^y)^2}{(\frac{y}{y+1})^2(\theta_0^y)^2+\frac{y\theta_0^2}{(y+2)(y+1)^2}(y\theta_0^{y-1})^2}
=\frac{y+2}{2y+2}.$$ For instance, for the specific case of
uniform distributions cited in Example
\ref{ex:right-endpoint-parameter} (for which we have
$\mu(\theta)=\frac{\theta}{2}$ and
$\sigma^2(\theta)=\frac{\theta^2}{12}$ for all
$\theta\in(0,\infty)$; so the sequence
$\left\{\mu^{-1}(\bar{X}_n):n\geq 1\right\}$ in Proposition
\ref{prop:LD-for-MM-estimators} is defined by
$\mu^{-1}(\bar{X}_n)=2\bar{X}_n$) we have $G(x):=x$, and therefore
we get $\lambda_0=3/4$ by setting $y=1$.

Finally we remark that, in general, we cannot find $\delta>0$ such
that $I_{\lambda_0,\theta_0}(\theta)>J_{\theta_0}(\theta)$ or
$I_{\lambda_0,\theta_0}(\theta)<J_{\theta_0}(\theta)$ for
$0<|\theta-\theta_0|<\delta$; for instance (see Figure \ref{fig2}
where $\theta_0=1$) this happens for the statistical model with
uniform distributions cited above (where $G(x):=x$ and
$\lambda_0=3/4$).
\begin{figure}[ht]
\begin{center}
\includegraphics[angle=0,width=0.5\textwidth]{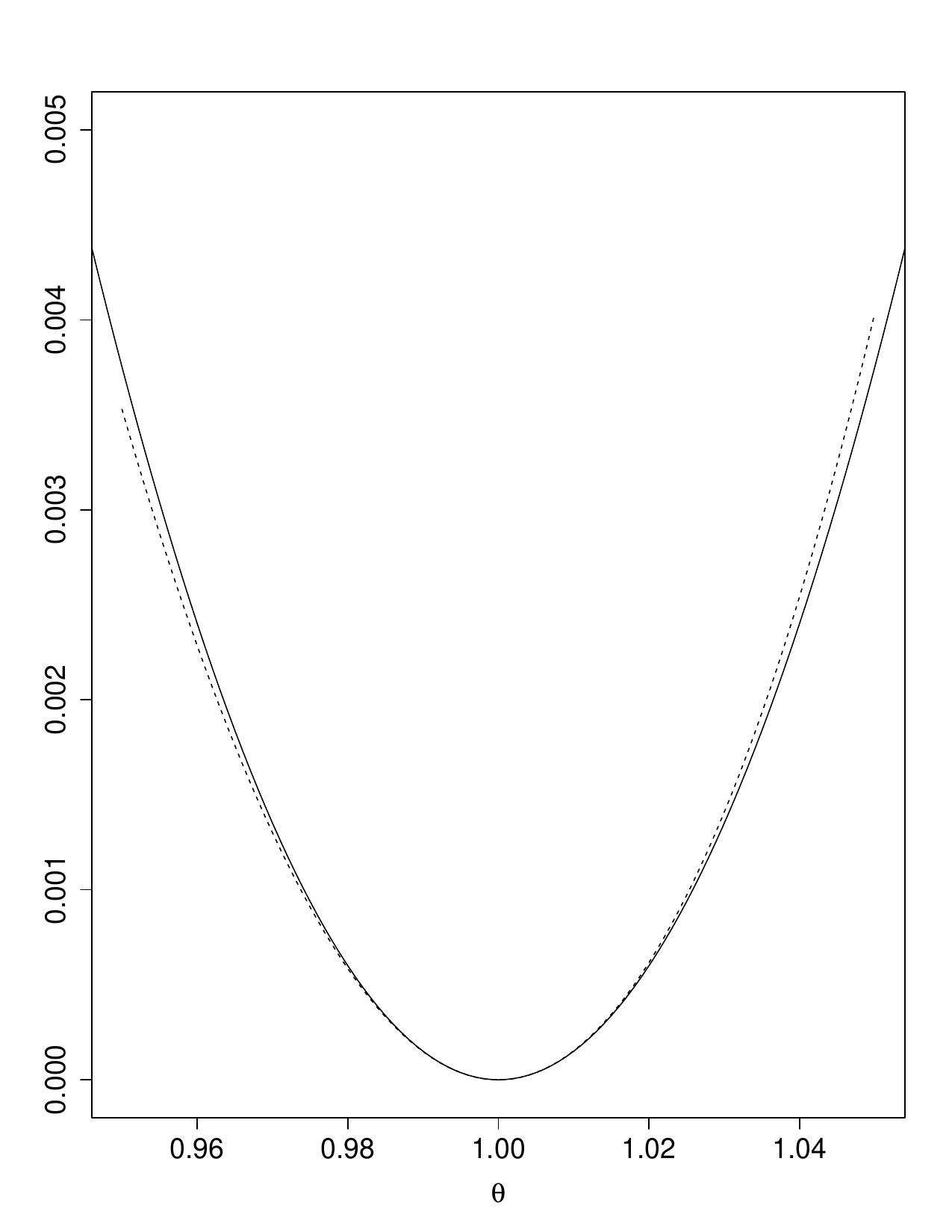}
\caption{The rate functions $I_{3/4,\theta_0}(\theta)$
(dashed line) and $J_{\theta_0}(\theta)$ (solid line) in a 
neighborhood of $\theta_0=1$ for the statistical model with 
uniform distributions.}\label{fig2}
\end{center}
\end{figure}
%
%
%
%
\appendix
\section{A class of counterexamples}\label{appendix}
In Section \ref{sub:example2}, for all the examples where the
distribution $G$ is symmetric, we find that
$G^{\prime\prime}(0)=0$ and that there is a unique optimal value
$\lambda_{\mathrm{max}}$, namely $\lambda_{\mathrm{max}}=0.5$.

Here we show that this is not necessarily the case, indeed we
present a procedure to construct another function $\tilde{G}$ with
the symmetry property and such that
$\tilde{G}^{\prime\prime}(0)=0$; this function will be determined
starting from a function $G$ with the properties cited above (for
instance it could be one of the choices illustrated in Example
\ref{ex:location-parameter} except the one concerning Gumbel
distribution). The aim is to illustrate that, for such a function
$\tilde{G}$, $\lambda=0.5$ cannot be an optimal value.

The function $\tilde{G}$ is defined by
$$\tilde{G}(x):=\left\{\begin{array}{ll}
\frac{G(1+x)}{2G^\prime(0)+1}&\ \mbox{for}\ x\leq -1\\
\frac{1}{2}+\frac{G^\prime(0)x}{2G^\prime(0)+1}&\ \mbox{for}\ |x|<1\\
\frac{2G^\prime(0)+G(x-1)}{2G^\prime(0)+1}&\ \mbox{for}\ x\geq 1.
\end{array}\right.$$
One can check that, if $G$ is twice differentiable, then
$\tilde{G}$ is also twice differentiable (and in particular the
condition $G^{\prime\prime}(0)=0$ is needed to say that
$\tilde{G}$ is twice differentiable); the details are omitted.
Moreover, if we consider
$$G(-1)=\frac{1}{2(2G^\prime(0)+1)}\ \mbox{and}\ G(1)=\frac{4G^\prime(0)+1}{2(2G^\prime(0)+1)}$$
(we recall that $G(0)=\frac{1}{2}$ by the symmetry property of
$G$), we have
$$\tilde{G}^{-1}(\lambda):=\left\{\begin{array}{ll}
G^{-1}(\lambda(2G^\prime(0)+1))-1&\ \mbox{for}\ \lambda\leq G(-1)\\
(\lambda-\frac{1}{2})\frac{2G^\prime(0)+1}{G^\prime(0)}&\ \mbox{for}\ G(-1)<\lambda<G(1)\\
G^{-1}(\lambda(2G^\prime(0)+1)-2G^\prime(0))+1&\ \mbox{for}\
\lambda\geq G(1).
\end{array}\right.$$
Then, around $\lambda=1/2$ (more precisely for
$\lambda\in(G(-1),G(1))$ because $G(-1)\in(0,\frac{1}{2})$ and
$G(1)\in(\frac{1}{2},1)$), we have
$I_{\lambda,\theta_0}^{\prime\prime}(\theta_0)=\frac{1}{\lambda(1-\lambda)}\left(\frac{G^\prime(0)}{2G^\prime(0)+1}\right)^2$
by \eqref{eq:second-derivative-location-parameter}; thus
$\lambda=0.5$ cannot be an optimal value because
$I_{\lambda,\theta_0}^{\prime\prime}(\theta_0)$ is locally
minimized at $\lambda=1/2$ (in fact $\lambda=1/2$ maximizes the
denominator $\lambda(1-\lambda)$).

\paragraph{Acknowledgements.} We thank two anonymous referees for
their useful comments.

\end{document}